\pgfplotsset{compat=1.18}
\theoremstyle{plain}
\newtheorem{theorem}{Theorem}
\newtheorem{lemma}[theorem]{Lemma}
\newtheorem{proposition}[theorem]{Proposition}
\theoremstyle{definition}
\theoremstyle{remark}
\newtheorem*{remark}{Remark}
\newcommand{\Andras}{Andr\'as}
\newcommand{\Erdos}{Erd\H{o}s}
\newcommand{\Sarkozy}{S\'ark\"ozy}
\newcommand{\Lovasz}{Lov\'asz}
\newcommand{\Chvatal}{Chv\'atal}
\title{The independence and clique cover numbers\\of the squarefree graph}
\author{Boris Alexeev \and Dustin G.\ Mixon\thanks{Department of Mathematics, The Ohio State University, Columbus, OH} \thanks{Translational Data Analytics Institute, The Ohio State University, Columbus, OH} \and Will Sawin\thanks{Department of Mathematics, Princeton University, Princeton, NJ}}
\date{}
\begin{document}
\maketitle

\begin{abstract}
We determine the largest subset $A\subseteq \{1,\dotsc,n\}$ such that for all $a,b\in A$, the product $ab$ is not squarefree.
Specifically, the maximum size is achieved by the \emph{complement} of the odd squarefree numbers.

This resolves a problem of Paul \Erdos{} and \Andras{} \Sarkozy{} from 1992.
\end{abstract}

\section{Introduction}

In 1992, \Erdos{} wrote~\cite{Er92b}:
\begin{quote}
\textbf{23.}
Here is a recent problem of \Sarkozy{} and myself: Let $a_1<a_2<\dotsb<a_k\le n$ be a sequence of [positive] integers.
Assume that the product $a_i a_j$ is never squarefree.
When is $k$ maximal?
Our obvious guess was: $k$ is maximal if the $a$'s are the even numbers and the odd non-squarefree numbers.
\end{quote}
We will prove that this guess is correct.

The online collection \url{erdosproblems.com} includes this problem~\cite{dotcom} as~\#844.
The authors thank Thomas Bloom for curating this very useful resource.
(We note that just before the completion of this paper, that website was updated to include a solution to this problem.
We will discuss this in detail shortly.)

In a different language, one may define a graph with vertex set $\{1,\ldots,n\}$ and edges connecting $a$ and $b$ if $ab$ is squarefree.
Then \Erdos{} and \Sarkozy{} ask for the independence number of this graph, that is, the size of a maximum independent set (a set of vertices with no edges connecting them, also known as a stable set or co/anti-clique).\footnote{For $n=1$, there arises a technical issue regarding whether $\{1\}$ is an independent set.  This issue may be resolved either by considering $1$ to have a self-loop, which disqualifies $1$ from being in an independent set, or alternatively, by only considering $n\geq2$, where this issue does not change the answer.}

If $a$ is \emph{not} squarefree, then it is isolated (not adjacent to any other vertex) and so may be freely included in any independent set.
Accordingly, it makes sense to focus our attention only to squarefree vertices.
Let the \emph{squarefree graph} up to $n$ have as vertices only the squarefree numbers in $\{1,\ldots,n\}$, and continue to let vertices $a,b$ be adjacent if $ab$ is squarefree.
Because $a$ and $b$ are themselves squarefree, this condition is now equivalent to $a$ and $b$ being coprime.
The guess of \Erdos{} and \Sarkozy{} now corresponds to the following statement.

\begin{theorem}
\label{thm.main}
For the squarefree graph, the even vertices form a maximum independent set.
\end{theorem}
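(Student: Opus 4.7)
My plan is to prove the matching upper bound $\alpha(G) \le |E_n|$, where $E_n$ is the set of even squarefree integers in $[1,n]$, via $\alpha(G) \le \bar\chi(G)$ together with an explicit clique cover of size $|E_n|$. First, note that any two elements of $E_n$ share the factor $2$ and are hence non-coprime, so $E_n$ is itself an independent set in the squarefree graph; this gives the easy lower bound $\alpha(G) \ge |E_n|$, and the count $|E_n|$ equals the number of odd squarefree integers in $[1, n/2]$ via the bijection $m \mapsto 2m$. Since a clique in the squarefree graph is a pairwise coprime set of squarefree integers, the task reduces to partitioning the vertex set into exactly $|E_n|$ pairwise coprime families.

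The cover I have in mind is indexed by $2m \in E_n$: for each such $2m$, the clique $C_m$ consists of $\{2m\}$ together with a set $D_m$ of odd squarefree integers in $[1,n]$, each coprime to $m$, with the elements of $D_m$ pairwise coprime; the $D_m$'s should partition the odd squarefree integers in $[1,n]$. I would first verify a handful of small cases (say $n \le 30$) by hand to confirm such partitions are achievable in practice, then produce the general cover by setting up a bipartite incidence structure between the odd squarefrees $d \le n$ and the ``coprime slots'' offered by each $2m$, and applying Hall's theorem. An alternative route is a direct greedy construction processing the odd squarefrees $d$ in a structured order, for instance by decreasing value or by increasing largest prime factor, and assigning each $d$ to the first admissible $C_{m(d)}$ (one satisfying $\gcd(d, m(d)) = 1$ and with every current odd element of $C_{m(d)}$ coprime to $d$).

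The main obstacle will be to verify that the matching (or greedy) construction succeeds for every $n$. Concretely, one needs an inequality saying that for each set $S$ of odd squarefree integers in $[1,n]$, the compatible slots among the $C_m$'s can accommodate $|S|$ pairwise-coprime additions. I expect this to reduce, after accounting for the partition structure, to counting inequalities comparing odd squarefrees in $[1,n]$ and $[1,n/2]$ that avoid specified primes, provable using inclusion-exclusion and Mertens-type estimates on products $\prod_p(1 + p^{-1})$, perhaps combined with an inductive peeling of the smallest prime factor. Two flavors of hard case look likely to drive the argument: the ``large'' odd squarefrees in $(n/2, n]$, which have relatively few compatible even partners of size $\le n$, and heavily composite odd squarefrees like $3 \cdot 5 \cdot 7 \cdots$ with many prime factors, which rule out many candidate $m$. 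Simultaneously controlling these is where the heart of the proof will lie.
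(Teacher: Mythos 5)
Your high-level strategy is the same one the paper pursues for its Theorem~\ref{thm.stronger}: prove $\alpha(G)\le\bar\chi(G)$ by exhibiting a clique cover in which each clique contains exactly one even squarefree number, and note that the lower bound $\alpha(G)\ge|E_n|$ is trivial. You also correctly identify the two hard regimes (odd squarefrees near $n$, and odd squarefrees with many small prime factors). However, there are concrete gaps in the execution that are not merely matters of detail.

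First, Hall's theorem does not apply as you describe. The constraint is not that each odd $d$ be coprime to its assigned $m$; it is that all odd numbers assigned to the same $m$ be pairwise coprime. That is a constraint \emph{among} the images, not a bipartite incidence condition, so there is no fixed ``slot'' structure on which to run Hall. (You would need something like a matroid-intersection or list-colouring formulation, and it is not clear how to set one up cleanly.) The paper instead runs a sequential greedy assignment and uses a per-step pigeonhole bound: when placing $\ell$, it suffices that the number of even vertices coprime to $\ell$ exceed the number of already-placed odd vertices sharing a factor with $\ell$, since each such vertex blocks at most one clique. Second, the greedy \emph{order} is delicate and your proposed orders (decreasing value, or increasing largest prime factor) are not the ones for which the analysis closes. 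The paper uses a ``most-constrained-first'' order, processing odd vertices by increasing number $C(\ell,n)$ of coprime even vertices; this order is what makes the two-part criterion of Lemma~\ref{lem.main} (a pigeonhole count for small-$C$ vertices, a degree comparison for large-$C$ vertices) sufficient. Third, you underestimate the analytic difficulty. A direct comparison of $\mathbb{P}\{f(\ell)\le t\}$ against $t/2$ fails if one only uses first-moment (Mertens-type) information: the paper must condition on $3\mid\ell$ and run a negative moment method ($\mathbb{E}[X^{-1}]$ Markov bound) to get a tail bound tight enough near $t\approx 2/3$. Finally, the resulting nonasymptotic estimates only kick in for $n\ge 10^{10}$, and the paper relies on explicit computation (with a different, more memory-efficient greedy rule) for all smaller $n$; your sketch does not anticipate that a computational component is unavoidable with this line of attack. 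It is also worth knowing that Theorem~\ref{thm.main} alone (though not the clique cover statement) has a short proof via a 1974 theorem of \Chvatal{} on intersecting hereditary families, which sidesteps all of this.
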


For example, the case where $n=15$ is illustrated in Figure~\ref{fig:ex15}.

\begin{figure}
\centering
\begin{tikzpicture}[scale=3,thick]

  % List of squarefree numbers < 16
  \def\nodelist{1,2,3,5,6,7,10,11,13,14,15}
  \def\numnodes{11}

  % Place nodes and color even ones yellow
  \foreach [count=\i] \k in \nodelist {
    \pgfmathsetmacro{\angle}{360/\numnodes*(\i - 1)}
    \ifnum\k=2 \node[draw,circle,fill=lightgray,minimum size=7mm,inner sep=0pt] (n\k) at ({cos(\angle)}, {sin(\angle)}) {\k};
    \else\ifnum\k=6 \node[draw,circle,fill=lightgray,minimum size=7mm,inner sep=0pt] (n\k) at ({cos(\angle)}, {sin(\angle)}) {\k};
    \else\ifnum\k=10 \node[draw,circle,fill=lightgray,minimum size=7mm,inner sep=0pt] (n\k) at ({cos(\angle)}, {sin(\angle)}) {\k};
    \else\ifnum\k=14 \node[draw,circle,fill=lightgray,minimum size=7mm,inner sep=0pt] (n\k) at ({cos(\angle)}, {sin(\angle)}) {\k};
    \else \node[draw,circle,fill=white,minimum size=7mm,inner sep=0pt] (n\k) at ({cos(\angle)}, {sin(\angle)}) {\k};
    \fi\fi\fi\fi
  }

  % Draw edges if coprime
  \foreach \a in \nodelist {
    \foreach \b in \nodelist {
      \ifnum\a<\b
        \pgfmathparse{gcd(\a,\b)==1 ? 1 : 0}
        \ifnum\pgfmathresult=1
          \draw (n\a) -- (n\b);
        \fi
      \fi
    }
  }

\end{tikzpicture}
\qquad\quad
\begin{tikzpicture}[scale=3,thick]

  % List of squarefree numbers < 16
  \def\nodelist{1,2,3,5,6,7,10,11,13,14,15}
  \def\numnodes{11}

  % Place nodes and color even ones yellow
  \foreach [count=\i] \k in \nodelist {
  \pgfmathsetmacro{\angle}{360/\numnodes*(\i - 1)}
  \ifnum\k=1 \node[draw,circle,fill=blue!40,minimum size=7mm,inner sep=0pt] (n\k) at ({cos(\angle)}, {sin(\angle)}) {\k};
  \else\ifnum\k=2 \node[draw,circle,fill=blue!40,minimum size=7mm,inner sep=0pt] (n\k) at ({cos(\angle)}, {sin(\angle)}) {\k};
  \else\ifnum\k=3 \node[draw,circle,fill=blue!40,minimum size=7mm,inner sep=0pt] (n\k) at ({cos(\angle)}, {sin(\angle)}) {\k};
  \else\ifnum\k=5 \node[draw,circle,fill=blue!40,minimum size=7mm,inner sep=0pt] (n\k) at ({cos(\angle)}, {sin(\angle)}) {\k};

  \else\ifnum\k=6 \node[draw,circle,fill=green!40,minimum size=7mm,inner sep=0pt] (n\k) at ({cos(\angle)}, {sin(\angle)}) {\k};
  \else\ifnum\k=7 \node[draw,circle,fill=green!40,minimum size=7mm,inner sep=0pt] (n\k) at ({cos(\angle)}, {sin(\angle)}) {\k};
  \else\ifnum\k=11 \node[draw,circle,fill=green!40,minimum size=7mm,inner sep=0pt] (n\k) at ({cos(\angle)}, {sin(\angle)}) {\k};
  \else\ifnum\k=13 \node[draw,circle,fill=green!40,minimum size=7mm,inner sep=0pt] (n\k) at ({cos(\angle)}, {sin(\angle)}) {\k};

  \else\ifnum\k=10 \node[draw,circle,fill=yellow!40,minimum size=7mm,inner sep=0pt] (n\k) at ({cos(\angle)}, {sin(\angle)}) {\k};

  \else\ifnum\k=14 \node[draw,circle,fill=red!40,minimum size=7mm,inner sep=0pt] (n\k) at ({cos(\angle)}, {sin(\angle)}) {\k};
  \else\ifnum\k=15 \node[draw,circle,fill=red!40,minimum size=7mm,inner sep=0pt] (n\k) at ({cos(\angle)}, {sin(\angle)}) {\k};

  \else \node[draw,circle,fill=white,minimum size=7mm,inner sep=0pt] (n\k) at ({cos(\angle)}, {sin(\angle)}) {\k};
  \fi\fi\fi\fi\fi\fi\fi\fi\fi\fi\fi
  }
  
  % \foreach [count=\i] \k in \nodelist {
  %   \pgfmathsetmacro{\angle}{360/\numnodes*(\i - 1)}
  %   \ifnum\k=2 \node[draw,circle,fill=blue!50,minimum size=7mm,inner sep=0pt] (n\k) at ({cos(\angle)}, {sin(\angle)}) {\k};
  %   \else\ifnum\k=6 \node[draw,circle,fill=green!50,minimum size=7mm,inner sep=0pt] (n\k) at ({cos(\angle)}, {sin(\angle)}) {\k};
  %   \else\ifnum\k=10 \node[draw,circle,fill=yellow!50,minimum size=7mm,inner sep=0pt] (n\k) at ({cos(\angle)}, {sin(\angle)}) {\k};
  %   \else\ifnum\k=14 \node[draw,circle,fill=red!50,minimum size=7mm,inner sep=0pt] (n\k) at ({cos(\angle)}, {sin(\angle)}) {\k};
  %   \else \node[draw,circle,fill=white,minimum size=7mm,inner sep=0pt] (n\k) at ({cos(\angle)}, {sin(\angle)}) {\k};
  %   \fi\fi\fi\fi
  % }

  % Draw edges if coprime
  \foreach \a in \nodelist {
    \foreach \b in \nodelist {
      \ifnum\a<\b
        \pgfmathparse{gcd(\a,\b)==1 ? 0 : 1}
        \ifnum\pgfmathresult=1
          \draw (n\a) -- (n\b);
        \fi
      \fi
    }
  }

\end{tikzpicture}
\caption{\textbf{(left)} The squarefree graph for $n=15$. The vertex set consists of all squarefree integers in $\{1,\ldots,n\}$, and vertices are adjacent precisely when their product is squarefree (or equivalently, when they are coprime). Consistent with the guess of \Erdos{} and \Sarkozy{}, the even vertices form the largest possible independent set of this graph. \textbf{(right)} The complement of the squarefree graph.  Vertices now share an edge when they share a factor, and it is more visibly discernible that the even vertices form a maximum clique. We color the vertices to illustrate how the chromatic number equals the clique number. In this paper, we prove that this occurs for all $n$.}
\label{fig:ex15}
\end{figure}
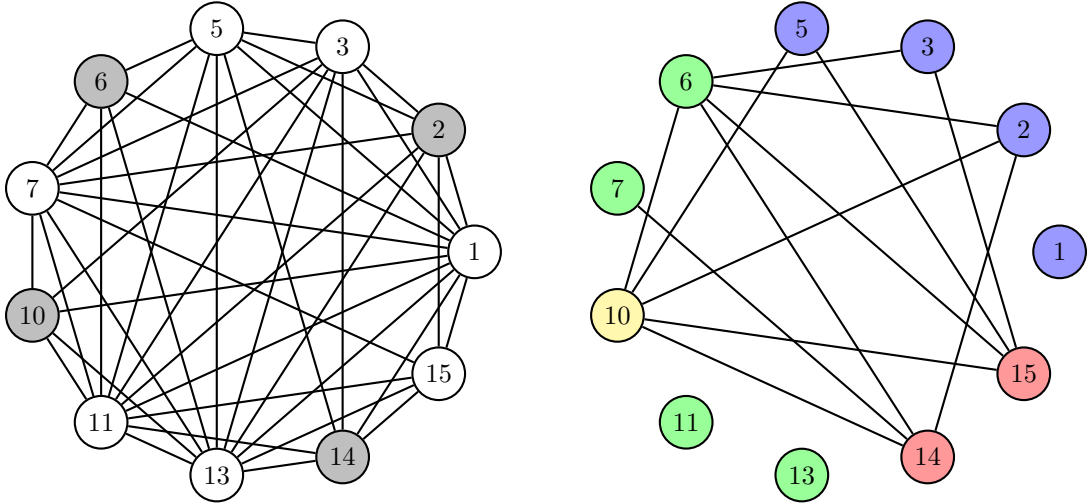

\begin{remark}
As we shall discuss in greater detail later, it is known that asymptotically the proportion of numbers that are squarefree is $\frac{6}{\pi^2} \approx 60.8\%$.
Moreover, among squarefree integers, asymptotically one-third are even and two-thirds are odd; thus among all numbers, the proportion that are even and squarefree is $\frac{2}{\pi^2} \approx 20.3\%$, and the proportion that are odd and squarefree is $\frac{4}{\pi^2} \approx 40.5\%$.
In terms of the squarefree graph, since the even vertices form a maximum independent set, the independence number is asymptotically one-third of the number of vertices.
In terms of the entire set $\{1,\ldots,n\}$, the complement of the odd squarefree numbers is a maximum independent set, so the independence number is asymptotically $1-\frac{4}{\pi^2}\approx 59.5\%$ of $n$.
\end{remark}

During the preparation of this paper, Desmond Weisenberg~\cite{dotcom} observed that Theorem~\ref{thm.main} is an easy consequence of a result of \Chvatal{}~\cite{chvatal}.
We describe this proof in Subsection~\ref{subsec.weisenberg}.
Perhaps surprisingly, \Chvatal{}'s result predates the original problem statement by two decades, and \Erdos{} was even aware of it!
Still, as we will see, \Erdos{}’s problem is vindicated to some extent by the curious behavior of several related invariants of the squarefree graph.
In particular, we present an entirely different proof of Theorem~\ref{thm.main} by proving an even stronger result:

\begin{theorem}
\label{thm.stronger}
The vertices of the squarefree graph can be partitioned into cliques, each containing exactly one even vertex.
\end{theorem}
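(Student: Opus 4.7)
My plan is to prove the theorem by strong induction on $n$. Setting $O$ and $E$ to be the odd and even squarefree numbers in $\{1,\dots,n\}$, the desired partition is equivalent to an assignment $\phi \colon O \to E$ such that $\gcd(m, \phi(m)) = 1$ for every $m \in O$ and such that each fiber $\phi^{-1}(2k)$ is pairwise coprime. The base case $n \leq 2$ is trivial, and in the inductive step the only substantive case is $n$ squarefree and odd: if $n$ is not squarefree the vertex set is unchanged, and if $n$ is squarefree and even we simply adjoin $\{n\}$ as a new singleton clique.

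For $n$ squarefree and odd, we must insert $n$ into some clique of the partition for $[1,n-1]$. I would use an augmenting-path argument in the spirit of the proof of Hall's theorem. If no clique directly accepts $n$ --- that is, every clique with even representative $2k$ coprime to $n$ already contains an odd member sharing a prime with $n$ --- we look for a chain of local swaps: reassign an obstructing odd $m'$ from its current clique to another clique that can accept $m'$, cascading as needed, until $n$ finds a home. The correctness of this procedure reduces to a Hall-type marriage condition on the bipartite ``compatibility'' graph between potential obstructions and available cliques. I expect to verify this condition using two ingredients: the pairwise-coprime structure of the existing fibers, which forces all multiples of any prime $p \mid n$ to lie in distinct cliques in the previous partition, and density estimates for squarefree numbers coprime to $n$ along the lines of the remark after Theorem~\ref{thm.main}.

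The main obstacle will be the case where $n$ has many small prime factors (for instance, $n$ a product of many consecutive small odd primes), where the count of cliques available to $n$ is small relative to the number of obstructions and a na\"ive union bound is insufficient. I expect to handle this by leveraging the constraint that obstructions themselves are already spread across distinct cliques in the previous partition, which restricts how severely they can crowd out the available cliques and gives the Hall-type inequality the slack it needs. A clean alternative --- should this inductive analysis prove too intricate --- would be a direct, non-inductive construction of $\phi$ via an explicit canonical rule based on the prime factorization of each $m$ (for example, $\phi(m) = 2k(m)$ where $k(m)$ is built from a specific set of primes disjoint from those of $m$); identifying such a rule that simultaneously ensures both the coprimality of $m$ with $k(m)$ and the pairwise coprimality within each fiber would itself constitute the main challenge of that alternative.
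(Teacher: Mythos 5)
Your proposal takes a genuinely different route from the paper, but it has a gap at exactly the step that is the heart of the matter. You reduce the theorem (correctly) to building an assignment $\phi\colon O\to E$ with $\gcd(m,\phi(m))=1$ and pairwise-coprime fibers, and then propose to construct $\phi$ by induction on $n$, inserting the newly arrived odd squarefree $n$ via an augmenting-path cascade when no clique directly accepts it. The difficulty is that the ``Hall-type marriage condition'' you appeal to is neither a standard theorem nor something you establish: the problem is not a bipartite matching (one odd vertex per even vertex) but a clique cover in which several odd vertices occupy the same clique subject to pairwise-coprimality constraints, so the usual augmenting-path machinery and Hall's theorem do not apply off the shelf. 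You identify the worst case yourself (``$n$ a product of many consecutive small odd primes'') and say you ``expect to handle this,'' but that expectation is precisely the content of the theorem. Moreover, your induction has to work against an arbitrary valid partition of $\{1,\dots,n-1\}$ inherited from the previous step, and you state no invariant on that partition that would make the reshuffling condition tractable.

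There is a further structural reason to be skeptical of the one-vertex-at-a-time order: the hardest odd vertices to place are the ones with many small prime factors, which are coprime to few even vertices, and your induction forces the largest such vertex to be placed last, when the most cliques are already congested. The paper's argument inverts this. It fixes $n$, orders the odd vertices by increasing number of coprime even vertices (\emph{most-constrained-first}), and then proves a genuine defect-Hall statement, Lemma~\ref{lem.main}: (a) for every $k\le K$ there are at most $k$ odd vertices coprime to at most $k$ even vertices (so the most-constrained block can be matched to distinct cliques), and (b) every less-constrained odd vertex has more coprime even vertices than it has odd vertices sharing a factor with it (so a free clique always exists by pigeonhole). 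Proving this Hall-type inequality requires real number-theoretic input — a negative moment bound $\mathbb{P}\{f(\ell)\le t\}\le t/2$ for the multiplicative density $f(\ell)=\prod_{p\mid\ell}(1-\tfrac1{p+1})$, conditioned on $3\mid\ell$ versus $3\nmid\ell$, plus explicit error bounds on degree sums — and it only kicks in for $n\gtrsim 10^{10}$, with the smaller range handled computationally by a capped greedy strategy. It is worth noting that the paper's authors also experimented with greedy, vertex-by-vertex insertion schemes (Subsections~2.1--2.2) and explicitly state they were unable to prove those succeed for all $n$; your augmenting-path variant is strictly more powerful than plain greedy, but the burden of proving its Hall condition has not been discharged. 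Your ``clean alternative'' (a canonical explicit rule $\phi(m)=2k(m)$) is likewise only a wish at this stage. As written, the proposal is a plausible plan of attack but not a proof.
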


Notably, since the even vertices form an independent set, Theorem~\ref{thm.stronger} implies that the clique cover number (i.e., the chromatic number of the complement), the independence number, and even the \Lovasz{} number of the squarefree graph all equal the number of even squarefree numbers up to $n$.

(For the full graph on $\{1,\dotsc,n\}$, the analogous equalities hold as well, because as mentioned earlier, the non-squarefree numbers are all isolated.)

\subsection{Weisenberg's proof of Theorem~\ref{thm.main}}
\label{subsec.weisenberg}

Weisenberg~\cite{dotcom} recalls the following result of \Chvatal{}~\cite{chvatal}:

\begin{proposition}
\label{prop.chvatal}
Let $\mathcal{F}$ be a family of subsets of $\{1,\dotsc,k\}$ such that whenever $A\in\mathcal{F}$ and there is an injection $f\colon B\to A$ such that $x\le f(x)$ for all $x\in B\subseteq\{1,\dotsc,k\}$, then $B\in\mathcal{F}$.
Then whenever $\mathcal{F'}\subseteq\mathcal{F}$ is an intersecting subfamily, we have
\[
\# \mathcal{F'} \le \#\{A\in\mathcal{F}: 1\in A\}.
\]
(An intersecting subfamily $\mathcal{F'}$ is one where for any two subsets $A,B\in\mathcal{F'}$, we have $A\cap B\ne\emptyset$.)
\end{proposition}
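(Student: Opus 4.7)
The plan is to use a compression (shifting) argument, and then reduce the theorem to a matching statement. For each pair $i<j$ in $\{1,\dotsc,k\}$, let $S_{ij}$ denote the shift operator on a family $\mathcal{H}$: it replaces each $A\in\mathcal{H}$ with $A':=(A\setminus\{j\})\cup\{i\}$ whenever $j\in A$, $i\notin A$, and $A'\notin\mathcal{H}$. Three properties will drive the reduction. First, $S_{ij}$ preserves the cardinality of $\mathcal{H}$. Second, $S_{ij}$ preserves the intersecting property, a classical observation of Frankl provable by a short case check on pairs. Third, since $A'$ is dominated by $A$ via the injection sending $i\mapsto j$ and fixing every other element of $A'$, the operator $S_{ij}$ preserves membership in $\mathcal{F}$. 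Iterating all shifts until stabilization yields a left-compressed intersecting family $\mathcal{G}\subseteq\mathcal{F}$ with $|\mathcal{G}|=|\mathcal{F}'|$.

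Next, decompose $\mathcal{G}=\mathcal{G}_0\sqcup\mathcal{G}_1$ and $\mathcal{F}=\mathcal{F}_0\sqcup\mathcal{F}_1$ according to whether sets contain $1$. Since $\mathcal{G}_1\subseteq\mathcal{F}_1$ automatically, the desired bound $|\mathcal{G}|\le|\mathcal{F}_1|$ reduces to producing an injection $\mathcal{G}_0\hookrightarrow\mathcal{F}_1\setminus\mathcal{G}_1$. My approach is to apply Hall's theorem to the bipartite graph on $\mathcal{G}_0$ versus $\mathcal{F}_1\setminus\mathcal{G}_1$ with edges $A\sim B$ whenever $B$ is dominated by $A$. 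As a sanity check that the target is nonempty, note that when $\mathcal{G}_0\ne\emptyset$ the singleton $\{1\}$ is always a neighbor: it is dominated by every nonempty set, it lies in $\mathcal{F}_1$ by domination closure, and it cannot lie in $\mathcal{G}_1$ (otherwise every member of $\mathcal{G}$ would contain $1$, contradicting $\mathcal{G}_0\ne\emptyset$).

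The hard part will be verifying Hall's condition for arbitrary $S\subseteq\mathcal{G}_0$, namely $|N(S)|\ge|S|$, where $N(S)=\{B\in\mathcal{F}_1\setminus\mathcal{G}_1 : B\text{ is dominated by some }A\in S\}$. I expect the argument to exploit two structural facts simultaneously. First, left-compression gives a near-explicit description of which elements of $\mathcal{F}_1$ are forced into $\mathcal{G}_1$: for each $A\in\mathcal{G}_0$, the set $(A\setminus\{\min A\})\cup\{1\}$ must already lie in $\mathcal{G}_1$, because otherwise the shift $S_{1,\min A}$ would still be active. Second, the intersecting property of $\mathcal{G}$ constrains how many sets of $\mathcal{G}_0$ can share a common dominated image in $\mathcal{F}_1\setminus\mathcal{G}_1$. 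The cleanest route is likely an auxiliary induction on the ground set size $k$, using the observation that both $\{A\in\mathcal{F}:k\notin A\}$ and $\{A\setminus\{k\}:A\in\mathcal{F},\ k\in A\}$ are domination-closed families on $\{1,\dotsc,k-1\}$, so the inductive hypothesis applies after peeling off the largest element. This passage from the localized Hall verification to a global matching, carried out against the shifted/compressed structure, is in my estimation where the real technical work of the proof lies.
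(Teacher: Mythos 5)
The paper does not actually prove Proposition~\ref{prop.chvatal}: it is cited verbatim from \Chvatal{}'s 1974 paper~\cite{chvatal} and used as a black box in Weisenberg's argument, so there is no in-paper proof to compare against.

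On its own merits, your attempt has a genuine gap that you yourself flag: the entire proof hinges on verifying Hall's condition for the bipartite domination graph between $\mathcal{G}_0$ and $\mathcal{F}_1\setminus\mathcal{G}_1$, and that verification is not carried out. The preliminaries are sound --- the three claimed properties of the shift $S_{ij}$ (cardinality preservation, preservation of the intersecting property, preservation of membership in $\mathcal{F}$ via the domination injection $i\mapsto j$) are all correct, and iterating shifts does stabilize at a left-compressed $\mathcal{G}\subseteq\mathcal{F}$ with $|\mathcal{G}|=|\mathcal{F}'|$. But everything after ``reduces to producing an injection $\mathcal{G}_0\hookrightarrow\mathcal{F}_1\setminus\mathcal{G}_1$'' is conjecture. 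In particular, the one concrete structural fact you offer actually cuts the other way: the compression-stability observation that $(A\setminus\{\min A\})\cup\{1\}\in\mathcal{G}_1$ for each $A\in\mathcal{G}_0$ shows that this natural map lands \emph{inside} $\mathcal{G}_1$, not in the target set $\mathcal{F}_1\setminus\mathcal{G}_1$ --- and it is not even injective (e.g., $\{2,4\}$ and $\{3,4\}$ both map to $\{1,4\}$), so it gives no handle on matching $\mathcal{G}_0$ into anything. You never exhibit any edge structure rich enough to run Hall, nor do you carry out the suggested induction on $k$ beyond observing that $\{A\in\mathcal{F}:k\notin A\}$ and $\{A\setminus\{k\}:A\in\mathcal{F},k\in A\}$ are again hereditary. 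For that induction to close, one also needs that the two pieces of the compressed intersecting family are intersecting in the reduced ground set $\{1,\dotsc,k-1\}$ --- the usual delicate point being that $A\cap B=\{k\}$ must be ruled out for $A,B$ in the top layer --- and none of that is addressed. As written, this is a plausible framework with the central step missing, not a proof.
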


To apply this to the squarefree graph, let $\mathcal{F}$ be the family of prime factors of the vertices, with the $i$th prime $p_i$ encoded simply as $i$.
That is, a subset $A\subseteq\{1,\dotsc,\pi(n)\}$ is a member of $\mathcal{F}$ if $\prod_{i\in A} p_i\le n$.
An intersecting subfamily $\mathcal{F'}$ of $\mathcal{F}$ corresponds to an independent set in the squarefree graph.
Thus, Proposition~\ref{prop.chvatal} implies that a maximum independent set is given by all multiples of the prime $p_1=2$.

\subsection{Outline}

In the next section, we start tackling Theorem~\ref{thm.stronger} by making a combinatorial reduction of sorts.
Here, we present multiple strategies for constructing the desired clique cover.
We verify computationally that one of these strategies works for small $n$, while for large $n$, we analyze another strategy via the conditions of Lemma~\ref{lem.main}.
Section~\ref{sec.number theoretic} presents a number-theoretic reduction in order to prove Lemma~\ref{lem.main}.
Specifically, our proof uses the negative moment method, assuming the estimates in Lemma~\ref{lem.estimates}.
We then prove these estimates in Section~\ref{sec.estimates} and conclude with a brief discussion in Section~\ref{sec.discussion}.

\section{Combinatorial reduction}
\label{sec.combinatorial}

First, observe that if $a$ and $b$ are both even, then $ab$ is divisible by $4$ and thus not squarefree.
Thus, the even squarefree numbers constitute an independent set in the squarefree graph.

We will show that they form an independent set of maximum size by constructing a vertex clique cover of the same size.
That is, we will partition the vertices of our graph into parts, each of which induces a clique and each of which contains exactly one even vertex.
The \Lovasz{} sandwich theorem says that for any graph, the independence number is less than or equal to the \Lovasz{} number, which in turn is less than or equal to the clique cover number (the chromatic number of the complement).
Thus our results also determine the \Lovasz{} number of the squarefree graph.

We think of the cliques in our cover as being named by the even vertex they contain.
In other words, we construct our clique cover as the fibers (inverse images) of a map from the entire vertex set to the subset of even vertices.
Since each even vertex is in the clique with its own name, this map will assign each even vertex to itself, and it remains to determine how to map the odd vertices.
In this section, we present multiple different strategies to accomplish this and then use two of them in different settings to prove Theorem~\ref{thm.stronger} (and thus Theorem~\ref{thm.main}).

\subsection{A greedy strategy}

A straightforward approach to assigning the odd vertices is \emph{greedy}:
We iteratively assign each odd vertex $\ell$, in increasing order, to the smallest even vertex $m$ where it is still allowed.
In order for an assignment to be allowed, $\ell$ must be coprime to $m$ and all odd numbers already assigned to $m$, thereby preserving the property that all vertices assigned to $m$ induce a clique.
For example, after performing this strategy for $n=100$, the partition obtained looks as follows:
\begin{center}
\setlength{\tabcolsep}{3pt}
\begin{tabular}{r|rrrrrrrrrrrrrrrrrrrrrrrrr}
2 & 2 & 3 & 5 & 7 & 11 & 13 & 17 & 19 & 23 & 29 & 31 & 37 & 41 & 43 & 47 & 53 & 59 & 61 & 67 & 71 & 73 & 79 & 83 & 89 & 97\\
6 & 6 & 35\\
10 & 10 & 21\\
14 & 14 & 15\\
22 & 22 & 39 & 85\\
26 & 26 & 33 & 95\\
30 & 30 & 77\\
34 & 34 & 55 & 57 & 91\\
38 & 38 & 51 & 65\\
46 & 46 & 87\\
58 & 58 & 69\\
70 & 70 & 93\\
\end{tabular}
\end{center}
The vertices assigned to the even vertex $2$ are all of the prime numbers.
The vertices assigned to the even vertex $6$ are products of pairs of consecutive primes: $2\times 3, 5\times 7, 11\times 13, 17\times 19, 23\times 29, \dotsc$ (but not $3\times 5$, as that would not be relatively prime to $2\times 3$).
The vertices assigned to larger even vertices are somewhat harder to describe.

We were able to run this strategy successfully for $n\le 1.8\cdot 10^8$ (one hundred eighty million).
We further believe that this strategy succeeds for all $n$, but we do not have a proof.
Meanwhile, computationally, we found that a small modification to the strategy was much faster to run.

\subsection{A faster greedy strategy}
\label{subsec.greedy}

The greedy strategy of the previous subsection continues to assign odd vertices to the same even vertex indefinitely.
To implement this strategy, one must track prime factors for a growing number of cliques.
To alleviate this memory bottleneck, we can modify the strategy by calling an even vertex ``done'' once it has an appropriate number of vertices assigned to it.

Specifically, for our faster greedy strategy, we iteratively assign each odd vertex $\ell$ to the smallest even vertex coprime to $\ell$ which is allowed and furthermore currently has fewer than \emph{three} assigned odd vertices.
This strategy is extremely efficient from a computational perspective, since we only need to keep a small subset of ``unfilled'' even vertices (and their currently assigned odd vertices) in working memory.
(The number three was chosen because two doesn't work and four ends up slower.)
This strategy will deliver the desired partition, provided such an even vertex always exists.
Figure~\ref{fig:greedy} illustrates this strategy in the case of $n=15$.

We believe that this strategy also succeeds for all $n$, but we again do not have a proof.  However, using this strategy, we were able to computationally verify Theorem~\ref{thm.stronger} for ``small'' $n$, specifically all $n\leq 3.5\cdot 10^{10}$ (thirty-five billion).

\begin{figure}[t]
\centering
\begin{tikzpicture}[>=Latex, thick]
  \foreach \k in {1,2,3,5,6,7,10,11,13,14,15} {
    \node[draw,circle,minimum size=7mm,inner sep=0pt] (n\k) at (\k,3) {\k};
  }
  \foreach \k in {2,6,10,14} {
    \node[draw,circle,minimum size=7mm,inner sep=0pt] (m\k) at (\k,0) {\k};
  }
  \draw[->] (n1)  -- (m2);
  \draw[->] (n2)  -- (m2);
  \draw[->] (n3)  -- (m2);
  \draw[->] (n5)  -- (m2);
  \draw[->] (n6)  -- (m6);
  \draw[->] (n7)  -- (m6);
  \draw[->] (n10) -- (m10);
  \draw[->] (n11) -- (m6);
  \draw[->] (n13) -- (m6);
  \draw[->] (n14) -- (m14);
  \draw[->] (n15) -- (m14);
\end{tikzpicture}
\caption{The greedy strategy described in Subsection~\ref{subsec.greedy}.
The fibers of this map are the cliques $\{1,2,3,5\}$, $\{6,7,11,13\}$, $\{10\}$, and $\{14,15\}$ in the squarefree graph with $n=15$. These are the color classes illustrated in Figure~\ref{fig:ex15} (right).
While the assignment $15\mapsto14$ seems like a ``close call'' (in the sense that $14$ is ``almost'' larger than $15$), this appears to be the last time this strategy ever assigns an odd number to its even predecessor.}
\label{fig:greedy}
\end{figure}
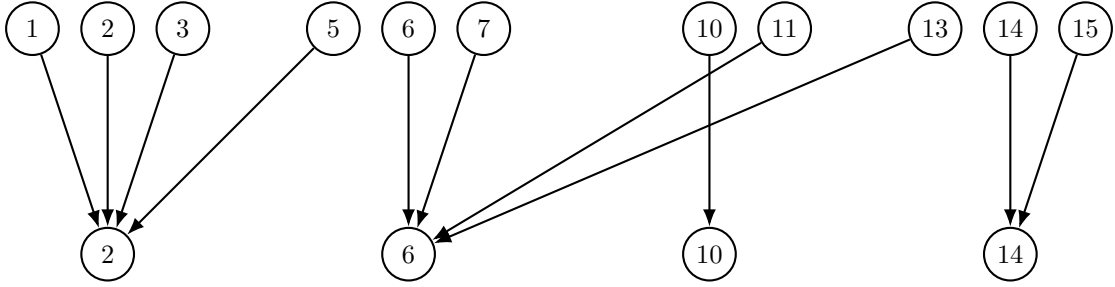

\subsection{A most-constrained-first strategy}
\label{subsec.most-constrained}

Next, we describe an alternative strategy that is more amenable to analysis.

Consider any strategy that iteratively assigns odd vertices (in some order) to even vertices.
Each step of the assignment will work provided there is a ``free'' even vertex.
Specifically, an odd vertex $\ell$ may be assigned to an even vertex $m$ as long as $\ell$ and $m$ are coprime, and no other odd vertex $\ell'$ that shares a factor with $\ell$ has already been assigned to $m$.
Note that by the pigeonhole principle, such an even vertex will necessarily exist if the number of even vertices $m$ that are coprime to $\ell$ is greater than the number of odd vertices $\ell'$ that (a) have already been assigned and (b) share a factor with $\ell$.
Unfortunately, depending on the order of assignment, it is possible for this condition to be violated.

A conservative order of assignment is given by a \textit{most-constrained-first} strategy.
Here, we assign odd vertices in increasing order of number of coprime even vertices.
For example, when $n=15$, this determines the following ordering of odd vertices:
\begin{center}
\renewcommand{\arraystretch}{1.3}
\begin{tabular}{|c|*{4}{>{\centering\arraybackslash}p{2em}}|c|}\hline
~$\ell$~ & \multicolumn{4}{c|}{even vertices coprime to $\ell$} & assignment \\ \hline
%  ~2~  &    &     &     &     & 2 \\
%  ~6~  &    &     &     &     & 6 \\
% ~10~  &   &     &     &     & 10 \\
% ~14~  &   &     &     &     & 14 \\
~15~  & 2   & 14  &     &     & 2 \\
 ~3~  & 2   & 10  & 14  &     & 10 \\
 ~5~  & 2   & 6   & 14  &     & 6 \\
 ~7~  & 2   & 6   & 10  &     & 2 \\
 ~1~  & 2   & 6   & 10  & 14  & 2 \\
~11~  & 2   & 6   & 10  & 14  & 2 \\
~13~  & 2   & 6   & 10  & 14  & 2 \\\hline
\end{tabular}
\end{center}
(When two vertices are coprime to the same number of even vertices, we break ties by listing them in vertex order.)
Since $15$ is the odd vertex that is coprime to the fewest even vertices, we assign it first.
When there are multiple choices available, we always use the least available assignment (at least in this example), so we take $15\mapsto2$.
Next, we assign~$3$.
Since $15$ shares a factor with~$3$, it blocks the would-be assignment~$2$, and so the least available assignment is $3\mapsto10$.
Toward the end of this process, we only have to assign~$1$ and the primes greater than~$\frac{n}{2}$, each of which is coprime to all other vertices, and so their assignments are completely unconstrained.

In order to prove that this strategy succeeds, we simply need to show that at all steps, there is a ``free'' even vertex:
For every odd vertex $\ell$, the number of even vertices coprime to $\ell$ is greater than or equal to the number of odd vertices $\ell'$ that both (a) are coprime to as few (or fewer) even vertices as $\ell$ and (b) share a factor with $\ell$.
(We now say ``greater than or equal to'' because, as written, $\ell'=\ell$ is included in the latter count.)
Analyzing the number of odd vertices $\ell'$ that satisfy both (a) and (b) is a bit tricky, but it turns out that depending on $\ell$, we can drop one of these constraints and still show that the strategy succeeds.
Specifically, if there are only a few even vertices coprime to $\ell$, we will keep condition (a) but ignore condition (b), while if there many even vertices coprime to $\ell$, we will keep condition (b) but ignore condition (a).

In other words, when $n$ is large, this strategy succeeds thanks to an overall dearth of constraints, as made explicit by the following lemma:

\begin{lemma}
\label{lem.main}
For every $n\geq10^{10}$, there exists $K=K(n)$ such that the following hold:
\begin{itemize}
\item[(a)]
For each $k\leq K$, there are at most $k$ odd vertices that are each coprime to at most $k$ even vertices.
\item[(b)]
Each odd vertex coprime to $k>K$ even vertices shares a factor with at most $k$ odd vertices.
\end{itemize}
\end{lemma}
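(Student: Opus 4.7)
The plan is to reduce both conditions to estimates on the multiplicative weight
\[
P(\ell) := \prod_{\substack{p \mid \ell \\ p > 2}} \frac{p}{p+1},
\]
which, up to normalization, measures the density of integers coprime to $\ell$. A routine M\"obius/Euler-product calculation yields
\[
m(\ell) = \tfrac{2n}{\pi^2}\,P(\ell) + O(E(\ell)), \qquad s(\ell) = \tfrac{4n}{\pi^2}(1-P(\ell)) + O(E(\ell)),
\]
where $m(\ell)$ is the number of even squarefree vertices coprime to $\ell$, $s(\ell)$ is the number of odd squarefree vertices sharing a factor with $\ell$, and $E(\ell)$ is an error term. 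Part~(b) is then almost immediate: the main-term inequality $s(\ell) \leq m(\ell)$ simplifies to $P(\ell) \geq 2/3$, so setting $K$ just above $\tfrac{4n}{3\pi^2}$ guarantees that $m(\ell) > K$ forces $P(\ell) > 2/3 + \delta$ for some $\delta > 0$, with enough slack to absorb $E(\ell)$.

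Part~(a) is the real work. It translates into showing
\[
\#\{\ell \leq n \text{ odd squarefree}:\ P(\ell) \leq t\} \leq \frac{2nt}{\pi^2}
\]
uniformly for $t \leq 2/3$. I would attack this via the \emph{negative moment method}: for any $s > 1$,
\[
\#\{\ell : P(\ell) \leq t\} \;\leq\; t^s \sum_{\substack{\ell \leq n \\ \ell \text{ odd squarefree}}} P(\ell)^{-s}.
\]
The multiplicative sum on the right, by the standard Euler-product factorization that pulls out a copy of $\zeta$, is asymptotic to $C_s \cdot n$ for an explicit constant $C_s$; this asymptotic (with quantitative error) should be the content of Lemma~\ref{lem.estimates}. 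Plugging in and cancelling one power of $n$, the required inequality reduces to
\[
C_s \cdot (2/3)^{s-1} \leq \tfrac{2}{\pi^2}
\]
at the critical value $t = 2/3$; every smaller $t$ is then automatic because $t^{s-1}$ is increasing in $t$.

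The main obstacle is that this inequality fails for small $s$: one computes $C_1 = \tfrac{4}{7\zeta(3)} \approx 0.475$, more than twice the target $\tfrac{2}{\pi^2} \approx 0.203$, and for small $s$ one still finds $C_s$ bigger than $(2/\pi^2)(3/2)^{s-1}$. The approach succeeds because as $s$ grows, $(3/2)^{s-1}$ eventually overtakes the growth of $C_s$, and one just needs to verify—by explicit bounds on the local Euler factors, each of which involves the quantity $(1+1/p)^s$—that some specific moderately large value of $s$ tips the inequality the right way. The hypothesis $n \geq 10^{10}$ enters only as a quantitative threshold ensuring that the error terms from Lemma~\ref{lem.estimates} and from the sieve-theoretic estimates for $m(\ell)$ and $s(\ell)$ all fit comfortably inside the small margins reserved above.
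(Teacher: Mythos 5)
Your outline gets part (b) essentially right and correctly identifies the negative-moment framework as the right tool, but the specific inequality you rely on for part (a) has a genuine gap, and the route you propose is not the one the paper takes.

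The paper does \emph{not} use higher negative moments. Its Lemma~\ref{lem.negative moment method} is a first-negative-moment bound
\[
\mathbb{P}\{X\leq s\}\leq\frac{\mathbb{E}[X^{-1}]-b^{-1}}{s^{-1}-b^{-1}},
\]
which exploits the almost-sure upper bound $X\leq b$ by subtracting off $b^{-1}$; this is a genuine sharpening of plain Markov. Moreover, the proof \emph{conditions on whether $3\mid\ell$}, using $b=1$ on the event $3\nmid\ell$ and $b=\tfrac34(1+\varepsilon_c)$ on $3\mid\ell$, precisely because (as the paper says explicitly) the unconditioned bound is too weak. Both the subtraction of $b^{-1}$ and the conditioning are essential to the paper's argument, and neither appears in your proposal.

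On the substance of your version: the claim that ``as $s$ grows, $(3/2)^{s-1}$ eventually overtakes the growth of $C_s$'' is false. Writing $C_s\asymp\prod_{p>2}\bigl(\tfrac{(1+1/p)^s+p}{p+1}\bigr)$, the effective cutoff in the product is at primes $p\lesssim s/\log s$, and $\log C_s$ grows like $s\log\log s$, which is superlinear in $s$; hence $C_s(2/3)^{s-1}\to\infty$. The quantity $C_s(2/3)^{s-1}$ is not eventually decreasing---it is $U$-shaped with a shallow dip near $s\approx 8$. At the dip it comes out to roughly $0.196$, only about $3\%$ below the target $\tfrac{2}{\pi^2}\approx 0.2026$. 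But the nonasymptotic version of the lemma cannot get away with $t=2/3$: part (b) forces the threshold $K/c(1,n)$ to exceed roughly $0.670$ once the error terms $\varepsilon_a,\varepsilon_c$ are accounted for, and at $t\approx 0.670$ the minimum of $C_st^{s-1}$ over $s$ creeps just \emph{above} $\tfrac{2}{\pi^2}$. In other words, the plain $s$th-moment Markov bound fails (or at best is razor-thin) exactly at the threshold the argument needs, with no room left for the error terms of Lemma~\ref{lem.estimates}. By contrast, the paper's conditioned first-moment bound gives about $0.289$ against a target of $1/3$ at $t=2/3$, a healthy $\sim 13\%$ margin that comfortably absorbs the shift to $t=0.672$ and all the $\varepsilon$'s. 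A further, more mundane issue: Lemma~\ref{lem.estimates} only supplies first negative moments, conditioned on $3\mid\ell$; your proposal would require new Euler-product and error estimates for $\sum f(\ell)^{-s}$ at $s\approx 8$, which is not trivial since both the main term and the error term grow with $s$.

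Your sketch of part (b) matches the paper's argument in spirit and is fine.
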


For example, despite the hypothesis $n\geq10^{10}$, we may take $K(15)=2$: there are zero odd vertices coprime to at most one even vertex, there is one odd vertex (namely, $15$) that is coprime to at most two even vertices, and every odd vertex coprime to more than two even vertices is either $1$ or prime, so they share a factor with at most two odd vertices (namely, themselves and possibly $15$).

In the next subsection, we use Lemma~\ref{lem.main} to prove Theorem~\ref{thm.stronger} for large $n$.
Our proof of Lemma~\ref{lem.main}, which occupies the next two sections, actually works for $n\geq 6\cdot 10^9$ (six billion).
As with our other two strategies, we believe that Lemma~\ref{lem.main} holds for all $n$, but our estimates don't kick in until $n$ is sufficiently large.
We have computationally verified that Lemma~\ref{lem.main} also holds for $n\leq2\cdot 10^4$ (twenty thousand), but we are missing a range of ``medium''-sized $n$.
(In practice, our implementation of this strategy exhibits much longer runtimes than the faster greedy strategy of the previous subsection, though it can certainly be improved, perhaps even to capture the remaining range of $n$.)

\subsection{Proof of Theorem~\ref{thm.stronger}}

We proceed in cases.

\medskip

\noindent
\textbf{Case I:} $n<10^{10}$.
We ran the faster greedy strategy to find the desired partition for all such $n$.

\medskip

\noindent
\textbf{Case II:} $n\geq10^{10}$.
We claim that the most-constrained-first strategy finds the desired partition.
First, for the odd vertices that are each coprime to at most $K$ even vertices, Lemma~\ref{lem.main}(a) allows our strategy to assign them to distinct even vertices.
Next, Lemma~\ref{lem.main}(b) ensures that our strategy is not obstructed from assigning the remaining odd vertices.

\section{Number-theoretic reduction}
\label{sec.number theoretic}

Our proof of Lemma~\ref{lem.main} makes use of nonasymptotic versions of certain asymptotic behaviors of the squarefree graph.
Specifically, the size of the vertex set $V(n)$ is asymptotically
\[
v(n):=\frac{6}{\pi^2}\cdot n.
\]
Of these vertices, $\sim\frac{2}{3}$ are odd.
That is, the vertex~$2$ has degree $\sim\frac{2}{3}v(n)$.
More generally, denoting
\[
f(\ell)
:=\prod_{p\mid\ell}\bigg(1-\frac{1}{p+1}\bigg),
\]
then the vertex $\ell$ has degree $\sim f(\ell)\cdot v(n)$.
If $\ell$ is odd, then $\sim\frac{1}{3}$ of these neighbors are even.
Similarly, $\sim\frac{2}{3}$ of its $\sim(1-f(\ell))\cdot v(n)$ non-neighbors are odd.
In particular, every odd $\ell$ has more even neighbors than odd non-neighbors when $f(\ell)>\frac{2}{3}$, at least asymptotically.
This suggests that Lemma~\ref{lem.main}(b) should hold for all sufficiently large $n$ provided we take $K\geq(\frac{2}{3}+\varepsilon) \cdot\frac{1}{3}v(n)$.
Next, an asymptotic version of Lemma~\ref{lem.main}(a) states that if $\ell$ is drawn uniformly at random from the odd vertices, then 
\[
\mathbb{P}\bigg\{\frac{1}{3}f(\ell)v(n)\leq k\bigg\}
\leq\frac{k}{\frac{2}{3}v(n)}
\qquad
\forall\,k\leq K.
\]
Changing variables $k=t\cdot \frac{1}{3}v(n)$, this simplifies to
\begin{equation}
\label{eq.tail bound}
\mathbb{P}\{f(\ell)\leq t\}
\leq\frac{t}{2}
\qquad
\forall\,t\leq\frac{K}{\frac{1}{3}v(n)}.
\end{equation}
Considering Figure~\ref{fig:cdf}, this appears to hold whenever $t\leq0.874$, suggesting that one may take $K$ to be as large as $0.874\cdot \frac{1}{3}v(n)$.
For simplicity, our analysis will not use the full detail of the distribution of (the nonasymptotic analog of) $f(\ell)$, so we instead obtain a smaller choice of $K$ that works.

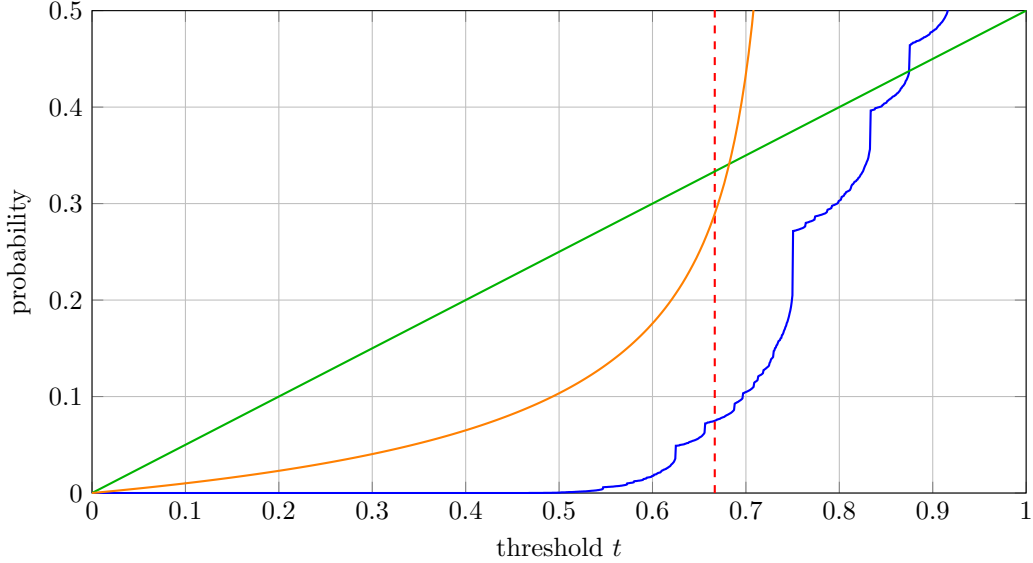
\begin{figure}[t]
\centering
\begin{tikzpicture}
  \begin{axis}[
    xlabel={threshold $t$},
    ylabel={probability},
    xmin=0, xmax=1,
    ymin=0, ymax=0.5,
    grid=major,
    width=14cm,
    height=8cm
  ]
    \addplot[
      color=blue,
      thick,
    ] 
    table[
      col sep=comma
    ] {data.txt};

    \addplot[
      color=red,
      thick,
      dashed
    ] coordinates {(2/3, 0) (2/3, 1)};
    
    \addplot[
      domain=0:1,
      samples=100,
      color=green!70!black,
      thick
    ] {x/2};

    \pgfmathsetmacro{\A}{16*3.1416^2/(91*1.202057)} % Approx ζ(3) ≈ 1.202057
    \pgfmathsetmacro{\B}{12*3.1416^2/(91*1.202057)}

    \addplot[
     thick,
     orange,
     domain=0:0.74,
     samples=200
    ]
    {
     (\A - 4/3) / (1/x - 4/3) * (1/4)
     + (\B - 1)/(1/x - 1) * (3/4)
    };
  \end{axis}
\end{tikzpicture}
\caption{In blue, we graph the asymptotic cumulative distribution function of $f(\ell)$ for $\ell$ drawn uniformly from the odd vertices. Comparing with the green line, we have $\mathbb{P}\{f(\ell)\leq t\}
\leq t/2$ for $t\leq0.874$. To prove an asymptotic version of Lemma~\ref{lem.main}, it suffices to establish this inequality for $t\leq\frac{2}{3}+\varepsilon$. By conditioning on whether $3\mid\ell$, the negative moment method delivers the upper bound in orange.}
\label{fig:cdf}
\end{figure}

For each odd $\ell\in V(n)$, let $C(\ell,n)$ denote the number of even vertices coprime to $\ell$, and put
\[
c(\ell,n)
:=\frac{1}{3}f(\ell)v(n),
\qquad
F(\ell,n):=\frac{C(\ell,n)}{c(1,n)},
\]
so that 
\[
C(\ell,n)\sim c(\ell,n),
\qquad
F(\ell,n)\sim f(\ell).
\]
(As a mnemonic, we generally use lowercase letters to denote the asymptotic version of quantities denoted by the corresponding capital letters.)
The nonasymptotic properties we use are captured by the following estimates, which we prove in the next section:

\begin{lemma}
\label{lem.estimates}
For every $n\geq 10^{10}$, the following hold:
\begin{itemize}
\item[(a)]
$\#\{\ell\in V(n):2\nmid\ell\}
=\big(1+E(\varepsilon_a)\big)\cdot\frac{2}{3}v(n)$ with $\varepsilon_a:=8.5\cdot10^{-5}$,
\item[(b)]
$\#\{\ell\in V(n):2\nmid\ell,\,3\nmid\ell\}
=\big(1+E(\varepsilon_b)\big)\cdot\frac{1}{2}v(n)$ with $\varepsilon_b:=1.8\cdot10^{-4}$,
\item[(c)]
$F(\ell,n)
=\big(1+E(\varepsilon_c)\big)\cdot f(\ell)$ for each odd $\ell\leq n$ with $\varepsilon_c:=3.8\cdot10^{-3}$,
\item[(d)]
$\sum_{\ell\in V(n),\,2\nmid\ell,\,3\nmid\ell}f(\ell)^{-1}
=\big(1+E(\varepsilon_d)\big)\cdot\frac{36}{91\zeta(3)}\cdot n$ with $\varepsilon_d:=1.3\cdot10^{-3}$,
\item[(e)]
$\sum_{\ell\in V(n),\,2\nmid\ell,\,3\mid\ell}f(\ell)^{-1}
=\big(1+E(\varepsilon_e)\big)\cdot\frac{16}{91\zeta(3)}\cdot n$ with $\varepsilon_e:=2.2\cdot10^{-3}$,
\end{itemize}
where $E(\varepsilon)$ denotes an error term whose absolute value is at most~$\varepsilon$.
Similar to the big~O notation $O(\varepsilon)$, the exact value of $E(\varepsilon)$ is possibly different with each use.
\end{lemma}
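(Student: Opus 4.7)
The plan is to prove all five estimates via classical Möbius inversion, leveraging two identities: the standard $\mu^2(\ell) = \sum_{d^2 \mid \ell} \mu(d)$ to detect squarefreeness, and $f(\ell)^{-1} = \sum_{d \mid \ell} 1/d$ for squarefree $\ell$, which follows from expanding $\prod_{p \mid \ell}(1 + 1/p)$.

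For (a) and (b), the first identity writes the count as $\sum_d \mu(d) N_d(n)$, where $N_d(n)$ counts odd (resp.\ coprime-to-$6$) multiples of $d^2$ in $[1,n]$ and equals $n/(2d^2)$ (resp.\ $n/(3d^2)$) up to an additive $O(1)$. Truncating at $d \le \sqrt n$ and completing the Möbius tail, the main term evaluates by Euler product to $(n/2)\prod_{p \ne 2}(1 - p^{-2}) = 4n/\pi^2 = \tfrac{2}{3}v(n)$ for (a), and similarly to $\tfrac{1}{2} v(n)$ for (b), while the remainder is an explicit $O(\sqrt n)$ that falls below $\varepsilon_a, \varepsilon_b$ for $n \ge 10^{10}$. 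For (c), a second Möbius inversion on the coprimality constraint gives $C(\ell, n) = \sum_{d \mid \ell} \mu(d) M_d(n)$, where $M_d(n)$ counts even squarefree multiples of $d$; writing each as $2de$ with $e$ odd, squarefree, and coprime to $d$ reduces $M_d$ to the setting of (a). The main terms telescope via the multiplicative identity $\sum_{d \mid \ell} \mu(d) f(d)/d = \prod_{p \mid \ell}(1 - 1/(p+1)) = f(\ell)$, and dividing by $c(1,n) \asymp n$ gives $F(\ell,n)/f(\ell) - 1 = O(2^{\omega(\ell)}/(f(\ell) \sqrt n))$, which lies below $\varepsilon_c$ uniformly over odd $\ell \le n$ once $n \ge 10^{10}$, thanks to $\omega(\ell) = O(\log n/\log\log n)$ and a Mertens-type lower bound on $f(\ell)$.

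For (d) and (e), I would apply the second identity to interchange the order of summation, writing $\sum_{\ell \le n,\text{ cond}} \mu^2(\ell) f(\ell)^{-1} = \sum_d (1/d)\, T_d(n)$ with $T_d(n) = \sum_{\ell \le n, d \mid \ell,\text{ cond}} \mu^2(\ell)$. Writing $\ell = dm$ with $\gcd(d,m) = 1$ and applying the Möbius technique of (a)/(b) with coprimality to $6d$ yields $T_d \approx (6n/\pi^2) \cdot f(6d)/d$ for squarefree $d$ coprime to $6$. Using $f(6d) = f(6) f(d) = \tfrac{1}{2} f(d)$ and summing over $d$, the main term becomes $(3n/\pi^2) \prod_{p \ne 2,3}(1 + 1/(p(p+1)))$, and the algebraic identity $1 + 1/(p(p+1)) = (1 - p^{-3})/(1 - p^{-2})$ collapses this Euler product to $12 \pi^2/(91 \zeta(3))$, delivering the constant $36/(91 \zeta(3))$ for (d). Case (e) reduces to (d) by substituting $\ell = 3m$, multiplying by the local factor $f(3)^{-1} = 4/3$, and shrinking the range to $n/3$, yielding $16/(91 \zeta(3))$. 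The bounds $\varepsilon_d, \varepsilon_e$ then follow from explicit tail estimates on the $d$-sum combined with accumulated per-$d$ errors from (a)/(b).

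The main obstacle is not conceptual but quantitative: pushing the numerical error constants down to the stated $10^{-4}$ to $10^{-3}$ range at $n = 10^{10}$ requires careful bookkeeping of terms that are easy to bound asymptotically but awkward to bound sharply. Part (c) is the most delicate, because the estimate must hold uniformly over all odd $\ell \le n$, and the worst case arises for highly composite $\ell$ (primorials and the like) where $f(\ell)$ is smallest and $2^{\omega(\ell)}$ is largest simultaneously; it is precisely this uniformity requirement that forces the threshold $n \ge 10^{10}$.
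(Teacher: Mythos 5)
Your overall strategy matches the paper's: squarefree counting via M\"obius inversion for (a), (b); the identity $f(\ell)^{-1}=\sum_{d\mid\ell}1/d$ and a sum interchange for (d), (e); and the Euler-product collapse via $1+\tfrac{1}{p(p+1)}=\tfrac{1-p^{-3}}{1-p^{-2}}$, all of which the paper uses. Your substitution $\ell=3m$ for (e) is also the paper's. So (a), (b), (d), (e) are sketches of the same argument, and the only real question is whether the explicit error bookkeeping closes --- which it will, given the right tools.

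The genuine gap is in (c), and it is quantitative but fatal as stated. You claim $F(\ell,n)/f(\ell)-1=O\bigl(2^{\omega(\ell)}/(f(\ell)\sqrt{n})\bigr)$ and assert this falls below $\varepsilon_c$ by $n=10^{10}$. Take $\ell_0=3\cdot5\cdot7\cdot11\cdot13\cdot17\cdot19\cdot23\cdot29=3234846615\le 10^{10}$: then $2^{\omega(\ell_0)}=512$, $f(\ell_0)=\prod_{3\le p\le29}\tfrac{p}{p+1}\approx0.387$, and $2^{\omega(\ell_0)}/(f(\ell_0)\sqrt{n})\approx 512/38700\approx 0.0132$, which exceeds $\varepsilon_c=3.8\cdot10^{-3}$ by a factor of about $3.5$ even with implied constant $1$, and the natural constant from the floor/tail truncation is larger than $1$. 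The problem is that $2^{\omega(\ell)}$ is the wrong weight: the floor errors accumulate as a \emph{weighted} divisor sum $\sum_{a\mid 2\ell}\sqrt{n/a}=\sqrt{n}\prod_{p\mid 2\ell}(1+p^{-1/2})$, not $2^{\omega(\ell)}\sqrt{n}$. For $\ell_0$ the weighted product is about $11.5$, roughly $45$ times smaller than $512$, and that factor is exactly what is needed. The paper makes this explicit in Lemma~\ref{lem.degree error} (keeping the weighted product intact) and then, crucially, in Lemma~\ref{lem.sigma}, which supplies the explicit uniform-in-$\ell$ bound $\prod_{p\mid\ell}(1+p^{-\delta})\le\alpha_\delta\,\ell^{\beta_\delta}$ (a Rankin/smooth-majorant device anchored at the prime $31$). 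The same Lemma~\ref{lem.sigma} with $\delta=1$ also gives the explicit lower bound on $f(\ell)$ that you wave at with "Mertens-type." Without a device of this kind --- keeping the weighted divisor sum and bounding it by a small power of $\ell$ --- neither the uniformity over $\ell\le n$ nor the numerics of (c) go through. Note also that the same $\alpha_\delta,\beta_\delta$ machinery is reused in the error accounting for (d) and (e), so it is genuinely load-bearing throughout.
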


We will prove the relevant nonasymptotic analog of \eqref{eq.tail bound} using the \textit{negative moment method}:

\begin{lemma}
\label{lem.negative moment method}
Suppose $X\in[0,b]$ almost surely and $\mathbb{E}[X^{-1}] < \infty$.
Then for every $s\in(0,b)$,
\[
\mathbb{P}\{X\leq s\}
\leq\frac{\mathbb{E}[X^{-1}]-b^{-1}}{s^{-1}-b^{-1}}.
\]
\end{lemma}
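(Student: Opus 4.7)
The plan is to apply Markov's inequality to a suitably shifted nonnegative random variable. Since $X\in[0,b]$ almost surely, the function $x\mapsto x^{-1}$ (extended by $0^{-1}=+\infty$) satisfies $x^{-1}\geq b^{-1}$ on $(0,b]$. The finiteness hypothesis $\mathbb{E}[X^{-1}]<\infty$ then forces $X>0$ almost surely, so we may define the nonnegative random variable
\[
Y:=X^{-1}-b^{-1}\geq 0\quad\text{a.s.,}\qquad \mathbb{E}[Y]=\mathbb{E}[X^{-1}]-b^{-1}.
\]

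Next, I would rewrite the target event in terms of $Y$. Since $s\in(0,b)$, we have $s^{-1}-b^{-1}>0$, and, working on the full-measure event $\{X>0\}$, the event $\{X\leq s\}$ coincides with $\{X^{-1}\geq s^{-1}\}$, which is the same as $\{Y\geq s^{-1}-b^{-1}\}$. Applying the classical Markov inequality to the nonnegative random variable $Y$ at the positive threshold $s^{-1}-b^{-1}$ yields
\[
\mathbb{P}\{X\leq s\}=\mathbb{P}\{Y\geq s^{-1}-b^{-1}\}\leq\frac{\mathbb{E}[Y]}{s^{-1}-b^{-1}}=\frac{\mathbb{E}[X^{-1}]-b^{-1}}{s^{-1}-b^{-1}},
\]
which is the desired bound.

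There is essentially no obstacle here beyond the bookkeeping of the shift by $b^{-1}$: the whole point of subtracting $b^{-1}$ is to turn $X^{-1}$ (which may be bounded away from $0$) into a genuinely nonnegative variable whose Markov bound is tight in the two-point case where $X$ takes only the values at the endpoints of $[0,b]$ (actually at $s$ and $b$, which is the extremal distribution). Thus the only real content of the lemma is the recognition that Markov's inequality applied to $X^{-1}-b^{-1}$ is strictly sharper than applying it to $X^{-1}$ itself, and this sharpening is exactly what will be needed later to get the bound \eqref{eq.tail bound} close to the empirical threshold $t\leq 0.874$ suggested by Figure~\ref{fig:cdf}.
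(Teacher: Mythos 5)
Your proof is correct and takes exactly the same route as the paper: shift to $Y := X^{-1}-b^{-1}$, observe it is nonnegative, translate $\{X\leq s\}$ to $\{Y\geq s^{-1}-b^{-1}\}$, and apply Markov's inequality. The paper's version is simply terser, omitting the (true but routine) observations that $X>0$ a.s.\ and that the threshold is positive.
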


\begin{proof}
Put $Y:=X^{-1}-b^{-1}$ and $t:=s^{-1}-b^{-1}$.
Then Markov's inequality gives
\[
\mathbb{P}\{X\leq s\}
=\mathbb{P}\{X^{-1}\geq s^{-1}\}
=\mathbb{P}\{Y\geq t\}
\leq\frac{\mathbb{E}Y}{t}
=\frac{\mathbb{E}[X^{-1}]-b^{-1}}{s^{-1}-b^{-1}}
\qedhere
\]
\end{proof}

\begin{proof}[Proof of Lemma~\ref{lem.main}]
We use Lemma~\ref{lem.estimates} to verify the result with
\[
K
:=0.672\cdot c(1,n).
\]
(Notably, $0.672$ is the notion of $\frac{2}{3}+\varepsilon$ that works well with our nonasymptotic analysis.)

For (a), draw $\ell$ uniformly from the odd vertices.
We wish to show that for every $k\leq K$,
\[
\mathbb{P}\{C(\ell,n)\leq k\}
\leq \frac{k}{\#\{\ell\in V(n):2\nmid\ell\}}.
\]
To accomplish this, we apply Lemma~\ref{lem.negative moment method} after conditioning on whether $3$ divides $\ell$.
(Without conditioning, the resulting bound is too weak for our purposes.)
Conditioning gives
\[
\mathbb{P}\{C(\ell,n)\leq k\}
=\mathbb{P}_{3\mid\ell}\bigg\{F(\ell,n)\leq\frac{k}{c(1,n)}\bigg\}\cdot\mathbb{P}\{\,3\mid\ell\,\}+\mathbb{P}_{3\nmid\ell}\bigg\{F(\ell,n)\leq\frac{k}{c(1,n)}\bigg\}\cdot\mathbb{P}\{\,3\nmid\ell\,\}.
\]
Lemma~\ref{lem.estimates}(a) and (b) together give
\[
\mathbb{P}\{\,3\nmid\ell\,\}
=\frac{\#\{\ell\in V(n):2\nmid\ell,\,3\nmid\ell\}}{\#\{\ell\in V(n):2\nmid\ell\}}
\leq\frac{(1+\varepsilon_b)\cdot\frac{1}{2}v(n)}{(1-\varepsilon_a)\cdot\frac{2}{3}v(n)}
=\frac{3(1+\varepsilon_b)}{4(1-\varepsilon_a)},
\]
\[
\mathbb{P}\{\,3\mid\ell\,\}
=1-\mathbb{P}\{\,3\nmid\ell\,\}
\leq1-\frac{3(1-\varepsilon_b)}{4(1+\varepsilon_a)}.
\]
Considering $f(\ell)\leq\frac{3}{4}$ whenever $2\nmid\ell$ and $3\mid\ell$, Lemma~\ref{lem.estimates}(c) gives that $F(\ell,n)\leq\frac{3}{4}\cdot(1+\varepsilon_c)$ for all such $\ell$.
Then Lemma~\ref{lem.negative moment method} implies
\[
\mathbb{P}\{C(\ell,n)\leq k\}
\leq\frac{\mathbb{E}_{3\mid\ell}[F(\ell,n)^{-1}]-(\frac{3}{4}(1+\varepsilon_c))^{-1}}{(\frac{k}{c(1,n)})^{-1}-(\frac{3}{4}(1+\varepsilon_c))^{-1}}\cdot\bigg(1-\frac{3(1-\varepsilon_b)}{4(1+\varepsilon_a)}\bigg)+\frac{\mathbb{E}_{3\nmid\ell}[F(\ell,n)^{-1}]-1}{(\frac{k}{c(1,n)})^{-1}-1}\cdot\frac{3(1+\varepsilon_b)}{4(1-\varepsilon_a)}.
\]
Next, we apply Lemma~\ref{lem.estimates}(c), and then (b) and (d) to get
\begin{align*}
\mathbb{E}_{3\nmid\ell}[F(\ell,n)^{-1}]
&\leq\frac{1}{1-\varepsilon_c}\cdot\mathbb{E}_{3\nmid\ell}[f(\ell)^{-1}]\\
&=\frac{1}{1-\varepsilon_c}\cdot\frac{1}{\#\{\ell\in V(n):2\nmid\ell,\,3\nmid\ell\}}\sum_{\substack{\ell\in V(n)\\2\nmid\ell,\,3\nmid\ell}}f(\ell)^{-1}
\leq\frac{1+\varepsilon_d}{(1-\varepsilon_c)(1-\varepsilon_b)}\cdot\frac{12\pi^2}{91\zeta(3)}.
\end{align*}
We similarly bound $\mathbb{E}_{3\mid\ell}[F(\ell,n)^{-1}]$ after applying Lemma~\ref{lem.estimates}(a) and (b) to estimate the underlying sample space:
\begin{align*}
\#\{\ell\in V(n):2\nmid\ell,\,3\mid\ell\}
&=\#\{\ell\in V(n):2\nmid\ell\}-\#\{\ell\in V(n):2\nmid\ell,\,3\nmid\ell\}\\
&\geq(1-\varepsilon_a)\cdot\frac{2}{3}v(n)-(1+\varepsilon_b)\cdot\frac{1}{2}v(n)\\
&=(1-4\varepsilon_a-3\varepsilon_b)\cdot\frac{1}{6}v(n).
\end{align*}
Combining this with Lemma~\ref{lem.estimates}(c) and (e) then gives
\[
\mathbb{E}_{3\mid\ell}[F(\ell,n)^{-1}]
\leq\frac{1}{1-\varepsilon_c}\cdot\frac{1}{\#\{\ell\in V(n):2\nmid\ell,\,3\mid\ell\}}\sum_{\substack{\ell\in V(n)\\2\nmid\ell,\,3\mid\ell}}f(\ell)^{-1}
\leq\frac{1+\varepsilon_e}{(1-\varepsilon_c)(1-4\varepsilon_a-3\varepsilon_b)}\cdot\frac{16\pi^2}{91\zeta(3)}.
\]
Denoting $t:=k/c(1,n)=3k/v(n)$, then
\begin{align*}
\mathbb{P}\{C(\ell,n)\leq k\}
&\leq\frac{\frac{1+\varepsilon_e}{(1-\varepsilon_c)(1-4\varepsilon_a-3\varepsilon_b)}\cdot\frac{16\pi^2}{91\zeta(3)}-(\frac{3}{4}(1+\varepsilon_c))^{-1}}{t^{-1}-(\frac{3}{4}(1+\varepsilon_c))^{-1}}\cdot\bigg(1-\frac{3(1-\varepsilon_b)}{4(1+\varepsilon_a)}\bigg)\\
&\qquad+\frac{\frac{1+\varepsilon_d}{(1-\varepsilon_c)(1-\varepsilon_b)}\cdot\frac{12\pi^2}{91\zeta(3)}-1}{t^{-1}-1}\cdot\frac{3(1+\varepsilon_b)}{4(1-\varepsilon_a)}.
\end{align*}
One may verify that for all $t\leq 0.672$, the right-hand side is less than $\frac{t}{2(1+\varepsilon_a)}$.
As such, for every $k\leq K:=0.672\cdot c(1,n)$, it holds that 
\[
\mathbb{P}\{C(\ell,n)\leq k\}
\leq \frac{t}{2(1+\varepsilon_a)}
= \frac{k}{(1+\varepsilon_a)\cdot\frac{2}{3}v(n)}
\leq\frac{k}{\#\{\ell\in V(n):2\nmid\ell\}},
\]
where the last step applies Lemma~\ref{lem.estimates}(a).

For (b), consider any odd vertex $\ell$ such that $C(\ell,n)> K:=0.672\cdot c(1,n)$. 
Note that by doubling the odd vertices coprime to $\ell$, we obtain the even squarefree numbers $\leq 2n$ that are coprime to $\ell$, of which there are $C(\ell,2n)$.
Also note that
\[
c(1,2n)
=\frac{1}{3}v(2n)
=\frac{2}{3}v(n)
=2c(1,n).
\]
It follows that the number of odd vertices that share a factor with $\ell$ is
\[
\begin{aligned}
&\#\{\ell'\in V(n):2\nmid\ell\}-C(\ell,2n)\\
&\qquad=\#\{\ell'\in V(n):2\nmid\ell\} - c(1,2n) F( \ell, 2n )&&\text{($F( \ell, 2n )=\tfrac{C(\ell,2n)}{c(1,2n)}$)} \\
&\qquad\leq(1+\varepsilon_a)\cdot\tfrac{2}{3}v(n) - c(1,2n)\cdot(1-\varepsilon_c)f(\ell)\qquad&&\text{(Lemma~\ref{lem.estimates}(a) and (c))}\\
&\qquad=2\big(1+\varepsilon_a-(1-\varepsilon_c)\cdot f(\ell)\big)\cdot c(1,n)&&\text{($c(1,n)=\tfrac{1}{3}v(n)$, $c(1,2n)=2c(1,n)$)}\\
&\qquad\leq2\big(1+\varepsilon_a-\tfrac{1-\varepsilon_c}{1+\varepsilon_c}\cdot F(\ell,n)\big)\cdot c(1,n)&&\text{(Lemma~\ref{lem.estimates}(c))}\\
&\qquad< 2\big(1+\varepsilon_a-\tfrac{1-\varepsilon_c}{1+\varepsilon_c}\cdot 0.672\big)\cdot c(1,n)&&\text{$(F( \ell, n )=\tfrac{C(\ell,n)}{c(1,n)}>0.672$)}\\
&\qquad\leq 0.672\cdot c(1,n)&&\text{($\varepsilon_a=8.5\cdot10^{-5}$, $\varepsilon_c=3.8\cdot10^{-3}$)}\\
&\qquad<C(\ell,n)&&\text{($C(\ell,n)>0.672\cdot c(1,n)$)},
\end{aligned}
\]
as desired.
\end{proof}

\section{Estimates}
\label{sec.estimates}

\subsection{Error bound on vertex degree estimate}

As a warmup, we first count the squarefree numbers in $\{1,\ldots,n\}$.
Given an integer $d$, let $A_d$ denote the set of numbers $\leq n$ divisible by $d$.
Then the squarefree numbers are given by
\[
V(n)
=\{1,\ldots,n\}\setminus\bigcup_p A_{p^2},
\]
where the union is taken over all primes $p$.
By inclusion--exclusion, we have
\begin{align*}
\#V(n)
&=n-\sum_p\#A_{p^2}+\sum_{p<q}\#(A_{p^2}\cap A_{q^2})-\sum_{p<q<r}\#(A_{p^2}\cap A_{q^2}\cap A_{r^2})+\cdots\\
&=n-\sum_p\bigg\lfloor\frac{n}{p^2}\bigg\rfloor+\sum_{p<q}\bigg\lfloor\frac{n}{p^2q^2}\bigg\rfloor-\sum_{p<q<r}\bigg\lfloor\frac{n}{p^2q^2r^2}\bigg\rfloor+\cdots\\
&=\sum_{d=1}^\infty\mu(d)\bigg\lfloor\frac{n}{d^2}\bigg\rfloor.
\end{align*}
Recalling $\sum_{d=1}^\infty\frac{\mu(d)}{d^2}
=\frac{6}{\pi^2}$, this suggests $\#V(n)\sim v(n)$.
In fact,
\begin{align*}
\big|\#V(n)-v(n)\big|
&=\bigg|\sum_{d=1}^\infty\mu(d)\bigg\lfloor\frac{n}{d^2}\bigg\rfloor-\sum_{d=1}^\infty\mu(d)\frac{n}{d^2}\bigg|\\
&\leq\sum_{d\leq\sqrt{n}+\frac{1}{2}}1+\sum_{d>\sqrt{n}+\frac{1}{2}}\frac{n}{d^2}
\leq\sqrt{n}+\frac{1}{2}+n\int_{\sqrt{n}}^\infty\frac{dx}{x^2}
=2\sqrt{n}+\frac{1}{2}.
\end{align*}
% \begin{align*}
% \Big|\#V(n)-v(n)\Big|
% &=\bigg|\sum_{d\leq\sqrt{n}}\mu(d)\bigg\lfloor\frac{n}{d^2}\bigg\rfloor-\sum_{d=1}^\infty\mu(d)\frac{n}{d^2}\bigg|\\
% &\leq\sum_{d\leq\sqrt{n}}1+\sum_{d>\sqrt{n}}\frac{n}{d^2}
% \leq\sqrt{n}+n\int_{\sqrt{n}-1}^\infty\frac{dx}{x^2}
% =\sqrt{n}+\frac{n}{\sqrt{n}-1},
% \end{align*}
%where the integral is finite when $n>1$.
% Overall, if $n>1$, then
% \[
% \#V(n)
% =v(n)+E\bigg(\sqrt{n}+\frac{n}{\sqrt{n}-1}\bigg).
% \]
We mimic this argument to obtain the following result, which will help us prove Lemma~\ref{lem.estimates}.

\begin{lemma}
\label{lem.degree error}
For every vertex $\ell\in V(n)$, the degree of $\ell$ satisfies
\[
\big|\operatorname{deg}(\ell,n)-f(\ell)v(n)\big|
\leq2\sqrt{n}\cdot\prod_{p\mid\ell}\bigg(1+\frac{1}{\sqrt{p}}\bigg)+\frac{1}{2}\prod_{p\mid\ell}2,
\]
where the products are taken over primes $p$.
\end{lemma}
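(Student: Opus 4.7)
The plan is to mimic the warmup computation of $|\#V(n) - v(n)|$, but now with a second layer of Möbius inversion to account for coprimality with $\ell$. Concretely, writing the degree as a count over squarefree vertices coprime to $\ell$,
\[
\operatorname{deg}(\ell,n) = \sum_{m\le n}\mu^2(m)\,[\gcd(m,\ell)=1]
\]
(absorbing the convention-dependent adjustment at $\ell=1$ into the additive $\frac{1}{2}\prod_{p\mid\ell}2$), I would expand $\mu^2(m)=\sum_{d^2\mid m}\mu(d)$ and $[\gcd(m,\ell)=1]=\sum_{e\mid\gcd(m,\ell)}\mu(e)$, then swap the order of summation. A contribution with $\gcd(d,\ell)>1$ forces a common prime of $m$ and $\ell$, so the double sum collapses to the range $\gcd(d,\ell)=1$, and the inner count becomes $\lfloor n/(d^2e)\rfloor$ since $d^2$ and $e$ are then coprime. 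This yields
\[
\operatorname{deg}(\ell,n)
=\sum_{\gcd(d,\ell)=1}\sum_{e\mid\ell}\mu(d)\mu(e)\bigg\lfloor\frac{n}{d^2e}\bigg\rfloor.
\]

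Next I would split off the main term by replacing each floor with the true fraction. The resulting exact sum factors over primes:
\[
n\cdot\prod_{p\nmid\ell}(1-p^{-2})\cdot\prod_{p\mid\ell}(1-p^{-1})
=\frac{6n}{\pi^2}\prod_{p\mid\ell}\frac{1-p^{-1}}{1-p^{-2}}
=\frac{6n}{\pi^2}\prod_{p\mid\ell}\frac{p}{p+1}
=f(\ell)\,v(n),
\]
which is exactly the desired main term.

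For the error, each summand contributes at most $\min\!\big(1,n/(d^2e)\big)$. Fixing $e\mid\ell$ and mimicking the warmup with $n$ replaced by $n/e$ — bounding the terms with $d\le\sqrt{n/e}+\tfrac12$ by $1$ and comparing the tail to $\int n/(ex^2)\,dx$ — gives
\[
\sum_{d\ge1}\min\!\bigg(1,\frac{n}{d^2e}\bigg)\le 2\sqrt{n/e}+\tfrac{1}{2}.
\]
Summing over $e\mid\ell$ and using that $\ell$ is squarefree, so that $\sum_{e\mid\ell}e^{-1/2}=\prod_{p\mid\ell}(1+p^{-1/2})$ and $\sum_{e\mid\ell}1=2^{\omega(\ell)}=\prod_{p\mid\ell}2$, produces the stated inequality.

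The only real obstacle is bookkeeping: one must verify that dropping the constraint $\gcd(d,\ell)=1$ in the error bound is legitimate (it only enlarges the bound), that the restriction is correctly retained in the main term so the Euler-product factorization truly reproduces $f(\ell)$, and that the convention for $\operatorname{deg}(1,n)$ matches the warmup's $\#V(n)$ so no extra $\pm1$ appears beyond the $\frac{1}{2}\prod_{p\mid\ell}2$ slack. Beyond these bookkeeping points, the estimate is a direct generalization of the warmup, one divisor $e\mid\ell$ at a time.
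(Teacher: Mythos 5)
Your proposal is correct and follows essentially the same route as the paper. The paper derives the identity $\operatorname{deg}(\ell,n)=\sum_{a\mid\ell}\sum_{(b,\ell)=1}\mu(ab)\lfloor n/(ab^2)\rfloor$ by inclusion--exclusion on the sets $A_p$ for $p\mid\ell$ and $A_{p^2}$ for $p\nmid\ell$; you derive the same identity (with $a=e$, $b=d$, and $\mu(ab)=\mu(a)\mu(b)$ since $\gcd(a,b)=1$) by opening up $\mu^2(m)$ and the coprimality indicator via M\"obius and swapping sums --- the same argument in different notation. The error analysis is then identical: bound each fractional part by $\min(1,n/(d^2e))$, drop the coprimality constraint on $d$, split at $\sqrt{n/e}+\tfrac12$, and sum over $e\mid\ell$ using multiplicativity of $\sum_{e\mid\ell}e^{-1/2}$ and $\sum_{e\mid\ell}1$.
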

\begin{remark}
Later, we apply Lemma~\ref{lem.degree error} with a half-integer $n$.
By this, we mean one should take $\lfloor n\rfloor$ in $V(\lfloor n\rfloor)$, $\operatorname{deg}(\ell,\lfloor n\rfloor)$, and $\{1,\dotsc,\lfloor n\rfloor\}$, but not elsewhere.
\end{remark}
\begin{proof}
First, the neighborhood of $\ell$ is given by
\[
\{1,\ldots,n\}\setminus\bigg(\bigcup_{p\mid\ell}A_p\cup\bigcup_{p\nmid\ell}A_{p^2}\bigg),
\]
where the unions are taken over primes $p$.
An inclusion--exclusion argument then gives
\[
\operatorname{deg}(\ell,n)
=\sum_{a|\ell}\sum_{(b,\ell)=1}\mu(ab)\bigg\lfloor\frac{n}{ab^2}\bigg\rfloor.
\]
This suggests the approximation
\[
\operatorname{deg}(\ell,n)
\sim n\sum_{a|\ell}\sum_{(b,\ell)=1}\frac{\mu(ab)}{ab^2}
=f(\ell)v(n).
\]
In fact,
\begin{align*}
\big|\operatorname{deg}(\ell,n)-f(\ell)v(n)\big|
&=\bigg|\sum_{a|\ell}\sum_{(b,\ell)=1}\mu(ab)\bigg\lfloor\frac{n}{ab^2}\bigg\rfloor-\sum_{a|\ell}\sum_{(b,\ell)=1}\mu(ab)\frac{n}{ab^2}\bigg|\\
&\leq\sum_{a|\ell}\sum_{(b,\ell)=1}\bigg\{\frac{n}{ab^2}\bigg\}\\
&\leq\sum_{a|\ell}\bigg(\sum_{b\leq\sqrt{\frac{n}{a}}+\frac{1}{2}}1+\frac{n}{a}\sum_{b>\sqrt{\frac{n}{a}}+\frac{1}{2}}\frac{1}{b^2}\bigg)\\
&\leq\sum_{a|\ell}\bigg(2\sqrt{\frac{n}{a}}+\frac{1}{2}\bigg),
\end{align*}
where the last step applies the integral comparison from our warmup.
Finally,
\[
\sum_{a|\ell}\bigg(2\sqrt{\frac{n}{a}}+\frac{1}{2}\bigg)
=2\sqrt{n}\cdot\sum_{a|\ell}\frac{1}{\sqrt{a}}+\frac{1}{2}\sum_{a\mid\ell}1\\
=2\sqrt{n}\cdot\prod_{p\mid\ell}\bigg(1+\frac{1}{\sqrt{p}}\bigg)+\frac{1}{2}\prod_{p\mid\ell}2,
\]
which implies the claim.
\end{proof}

\subsection{Estimates on (sums of powers of) divisors}

The bound in Lemma~\ref{lem.degree error} motivates the following estimate:

\begin{lemma}
\label{lem.sigma}
Fix $\delta\geq0$.
For any odd squarefree $\ell$, it holds that
\[
\prod_{p\mid\ell}(1+p^{-\delta})
\leq \alpha_\delta \cdot \ell^{\beta_\delta}
\]
%     \[
% \prod_{p\mid\ell}2
% \leq \alpha_{0}\cdot\ell^{\beta_{0}},
% \qquad
% \prod_{p\mid\ell}\bigg(1+\frac{1}{\sqrt{p}}\bigg)
% \leq \alpha_{1/2}\cdot\ell^{\beta_{1/2}},
% \qquad
% \prod_{p\mid\ell}\bigg(1+\frac{1}{p}\bigg)
% \leq \alpha_{1}\cdot\ell^{\beta_{1}}
% \]
for constants
\[
\beta_{\delta} := \frac{\log(1+31^{-\delta})}{\log(31)},
\qquad
\alpha_{\delta}=3234846615^{-\beta_{\delta}} \prod_{2<p<31}(1+p^{-\delta}).
\]

\end{lemma}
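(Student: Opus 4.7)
The key observation is that $3234846615 = \prod_{2<p<31} p$ is the product of the odd primes below $31$, so the claim can be rewritten as
\[
\prod_{p\mid\ell}(1+p^{-\delta})
\leq
\ell^{\beta_\delta}\cdot\prod_{2<p<31}(1+p^{-\delta})\,p^{-\beta_\delta}.
\]
The exponent $\beta_\delta$ is chosen precisely so that the inequality $1+p^{-\delta}\leq p^{\beta_\delta}$ (equivalently $(1+p^{-\delta})p^{-\beta_\delta}\leq 1$) holds with equality at $p=31$. The plan is to exploit monotonicity in $p$ to extend this inequality (in one direction for $p\geq 31$, in the other for $p<31$).

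First, I would verify the analytic lemma that $g(x):=\log(1+x^{-\delta})/\log x$ is decreasing on $(1,\infty)$ for every fixed $\delta\geq 0$. This reduces to checking that
\[
x\cdot g'(x)\cdot(\log x)^2 = -\frac{\delta x^{-\delta}\log x}{1+x^{-\delta}}-\log(1+x^{-\delta})
\]
is nonpositive, which is immediate since both terms on the right are $\leq 0$ (and the case $\delta=0$ just gives $g(x)=\log 2/\log x$). Consequently, $g(p)\leq g(31)=\beta_\delta$ for all primes $p\geq 31$, and $g(p)\geq \beta_\delta$ for all primes $3\leq p\leq 29$.

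Next, I would split the product over $p\mid\ell$ into the ranges $p\geq 31$ and $3\leq p<31$ (odd squarefree $\ell$ has no prime factor $2$). For each prime $p\geq 31$ dividing $\ell$, the inequality $g(p)\leq\beta_\delta$ yields $1+p^{-\delta}\leq p^{\beta_\delta}$, so
\[
\prod_{\substack{p\mid\ell\\ p\geq 31}}(1+p^{-\delta})
\;\leq\;
\prod_{\substack{p\mid\ell\\ p\geq 31}} p^{\beta_\delta}
\;=\;
\ell^{\beta_\delta}\cdot\prod_{\substack{p\mid\ell\\ 3\leq p<31}} p^{-\beta_\delta},
\]
where the last equality uses $\ell=\prod_{p\mid\ell}p$ (squarefree and odd). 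Combining with the untouched factors from $3\leq p<31$, I obtain
\[
\prod_{p\mid\ell}(1+p^{-\delta})
\;\leq\;
\ell^{\beta_\delta}\cdot\prod_{\substack{p\mid\ell\\ 3\leq p<31}}(1+p^{-\delta})\,p^{-\beta_\delta}.
\]

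Finally, to upgrade the product on the right to all of $\{3,5,\dotsc,29\}$ (giving exactly $\alpha_\delta$), I note that each missing factor corresponds to some prime $3\leq p<31$ with $p\nmid\ell$, for which $(1+p^{-\delta})p^{-\beta_\delta}\geq 1$ by the other half of the monotonicity of $g$. Inserting those factors can only increase the right-hand side, completing the bound. There is no serious obstacle here: the entire argument is an exercise in isolating the extremal prime $p=31$ and propagating the resulting one-parameter inequality by monotonicity; the only step that requires any care is the calculus verification that $g$ is monotone.
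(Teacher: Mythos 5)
Your proof is correct and follows essentially the same approach as the paper. The paper packages the argument via a two-variable function $g_\delta(t,x) := t^{\log(1+x^{-\delta})/\log x}$ that is multiplicative in $t$ and decreasing in $x$ (so that $g_\delta(p,p)=1+p^{-\delta}$), and specializes $x=31$ at the end; you unroll this into a direct comparison $1+p^{-\delta}\lessgtr p^{\beta_\delta}$ according to whether $p\gtrless 31$, together with an explicit calculus check that the exponent function is monotone. These are the same argument with slightly different bookkeeping, and your explicit verification of monotonicity (which the paper only asserts) is a welcome touch.
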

\begin{proof}
Note that the function
\[
g_\delta(t,x)
:= t^{\frac{\log(1+x^{-\delta})}{\log x}}
\qquad
(t\geq0, ~ x>1)
\]
is multiplicative in $t$, decreasing in $x$, and satisfies $g_\delta(t,t)=1+t^{-\delta}$.
Thus,
\begin{align*}
\prod_{p\mid\ell}(1+p^{-\delta})
&=g_\delta(\ell,x)\cdot\prod_{p\mid\ell}\frac{1+p^{-\delta}}{g_\delta(p,x)}\\
&\leq g_\delta(\ell,x)\cdot\prod_{p\mid\ell}\frac{1+p^{-\delta}}{g_\delta(p,\max\{p,x\})}
=g_\delta(\ell,x)\cdot\prod_{\substack{p\mid\ell\\p<x}}\frac{1+p^{-\delta}}{g_\delta(p,x)}
\leq g_\delta(\ell,x)\cdot\prod_{\substack{2<p<x}}\frac{1+p^{-\delta}}{g_\delta(p,x)},
\end{align*}
where the restriction $p>2$ in the last step uses the fact that $\ell$ is odd.
In particular, for any fixed $x$, this bounds $\prod_{p\mid\ell}(1+p^{-\delta})$ by a power function of $\ell$ whose exponent is smaller when $x$ is larger.
Taking $x=31$ gives the bound as stated.
\end{proof}

Note that we are particularly interested in $\delta\in\{0,\frac{1}{2},1\}$, in which case
\begin{align*}
\alpha_0 &\le 6.1620, 
& \alpha_{1/2} &\le 3.9926, 
& \alpha_{1} &\le 2.1110,\\
\beta_0 &\le 0.2019, 
& \beta_{1/2} &\le 0.0482, 
& \beta_{1} &\le 0.0093.
\end{align*}

\subsection{Proof of Lemma~\ref{lem.estimates}(a), (b), and (c)}
\label{sec.a and b}

For (a), Lemma~\ref{lem.degree error} gives
\[
\bigg|\#\{\ell\in V(n):2\nmid\ell\}-\frac{2}{3}v(n)\bigg|
=\big|\operatorname{deg}(2,n)-f(2)v(n)\big|\\
\leq2\bigg(1+\frac{1}{\sqrt{2}}\bigg)\sqrt{n}+1,
\]
which is $\leq (8.5\cdot 10^{-5})\cdot\frac{2}{3}v(n)$ for $n\geq10^{10}$.

For (b), Lemma~\ref{lem.degree error} gives
\[
\bigg|\#\{\ell\in V(n):2\nmid\ell,\,3\nmid\ell\}-\frac{1}{2}v(n)\bigg|
=\big|\operatorname{deg}(6,n)-f(6)v(n)\big|\\
\leq2\bigg(1+\frac{1}{\sqrt{2}}\bigg)\bigg(1+\frac{1}{\sqrt{3}}\bigg)\sqrt{n}+2,
\]
which is $\leq (1.8\cdot 10^{-4})\cdot\frac{1}{2}v(n)$ for $n\geq10^{10}$.

For (c), consider any odd $\ell\leq n$.
Recall that $C(\ell,n)$ is the number of even squarefree numbers $\leq n$ coprime to $\ell$.
This equals the number of odd squarefree numbers $\leq\frac{n}{2}$ coprime to $\ell$, which in turn equals the number of squarefree numbers $\leq\frac{n}{2}$ coprime to $2\ell$, i.e., $\operatorname{deg}(2\ell,\lfloor\frac{n}{2}\rfloor)$.
Also, notice that
\[
v(n)
=2v\bigg(\frac{n}{2}\bigg),
\qquad
f(2\ell)
=\frac{2}{3}f(\ell),
\]
and so
\[
c(\ell,n)
=\frac{1}{3}f(\ell)v(n)
=\frac{2}{3}f(\ell)v\bigg(\frac{n}{2}\bigg)
=f(2\ell)v\bigg(\frac{n}{2}\bigg).
\]
Then Lemmas~\ref{lem.degree error} and~\ref{lem.sigma} together give
\begin{align*}
\big|C(\ell,n)-c(\ell,n)\big|
&=\bigg|\operatorname{deg}\bigg(2\ell,\bigg\lfloor\frac{n}{2}\bigg\rfloor\bigg)-f(2\ell)v\bigg(\frac{n}{2}\bigg)\bigg|\\
&\leq2\sqrt{\frac{n}{2}}\cdot\prod_{p\mid2\ell}\bigg(1+\frac{1}{\sqrt{p}}\bigg)+\frac{1}{2}\prod_{p\mid2\ell}2\\
&=\bigg(1+\frac{1}{\sqrt{2}}\bigg)\sqrt{2n}\cdot\prod_{p\mid\ell}\bigg(1+\frac{1}{\sqrt{p}}\bigg)+\prod_{p\mid\ell}2\\
&\leq\bigg(1+\frac{1}{\sqrt{2}}\bigg)\sqrt{2n}\cdot \alpha_{1/2}\cdot\ell^{\beta_{1/2}}+\alpha_{0}\cdot\ell^{\beta_{0}}\\
&=(\sqrt{2}+1)\alpha_{1/2}\cdot \sqrt{n}\cdot \ell^{\beta_{1/2}}+\alpha_{0}\cdot \ell^{\beta_{0}}
\end{align*}
for all odd $\ell\leq n$.
Meanwhile, Lemma~\ref{lem.sigma} also gives
\[
c(\ell,n)
=\frac{1}{3}f(\ell)v(n)
=\frac{2n/\pi^2}{\prod_{p\mid\ell}(1+\frac{1}{p})}
\geq \frac{2}{\alpha_1\pi^2}\cdot n^{1-\beta_{1}}.
\]
As such, 
\[
\frac{|F(\ell,n)-f(\ell)|}{f(\ell)}
=\frac{|C(\ell,n)-c(\ell,n)|}{c(\ell,n)}
\leq \frac{(\sqrt{2}+1)\alpha_{1/2}\cdot n^{1/2+\beta_{1/2}}+\alpha_{0}\cdot n^{\beta_{0}}}{(2/(\alpha_1\pi^2))\cdot n^{1-\beta_{1}}},
\]
and we numerically verify that this is at most
\[
\frac{9.6390\cdot n^{0.5482}+6.1620\cdot n^{0.2019}}{0.0959\cdot n^{0.9907}} \leq3.8\cdot10^{-3}
\]
when $n\geq10^{10}$.

\subsection{Proof of Lemma~\ref{lem.estimates}(d) and (e)}

For the $3\nmid\ell$ case, we have
\[
\begin{aligned}
\sum_{\substack{\ell\in V(n)\\2\nmid\ell,~3\nmid\ell}}f(\ell)^{-1}
&=\sum_{\substack{\ell\leq n\\\ell\text{ squarefree}\\2\nmid\ell,~3\nmid\ell}}\prod_{p\mid\ell}\bigg(1+\frac{1}{p}\bigg)&&\text{(since $(1-\tfrac{1}{p+1})^{-1}=1+\tfrac{1}{p}$)}\\
&=\sum_{\substack{\ell\leq n\\\ell\text{ squarefree}\\2\nmid \ell,~3\nmid \ell}}\sum_{d\mid \ell}\frac{1}{d}&&\text{($\ell$ is squarefree)}\\
&=\sum_{\substack{d\leq n\\d\text{ squarefree}\\2\nmid d,~3\nmid d}}\frac{1}{d}\sum_{\substack{\ell\leq n\\\ell\text{ squarefree}\\2\nmid \ell,~3\nmid \ell,~d\mid \ell}}1\quad&&\text{(interchange sums)}.
\end{aligned}
\]
We simplify the inner sum by changing variables with $\ell=ad$ and then $b=2a$:
\[
\sum_{\substack{\ell\leq n\\m\text{ squarefree}\\2\nmid\ell,~3\nmid\ell,~d\mid\ell}}1
=\sum_{\substack{a\leq n/d\\a\text{ squarefree}\\2\nmid a,~(a,3d)=1}}1
=\sum_{\substack{b\leq 2\lfloor n/d\rfloor\\b\text{ squarefree}\\2\mid b,~(b,3d)=1}}1
=C\bigg(3d,2\bigg\lfloor\frac{n}{d}\bigg\rfloor\bigg).
\]
In the regime where $n\gg d$, we have 
\[
C\bigg(3d,2\bigg\lfloor\frac{n}{d}\bigg\rfloor\bigg)
\sim c\bigg(3d,2\bigg\lfloor\frac{n}{d}\bigg\rfloor\bigg)
=\frac{1}{3}f(3d)v\bigg(2\bigg\lfloor\frac{n}{d}\bigg\rfloor\bigg)
\sim\frac{4n}{\pi^2}\cdot \frac{f(3d)}{d}.
\]
Furthermore,
\[
\frac{f(3d)}{d^2}
=\frac{1}{d^2}\prod_{p\mid3d}\bigg(1-\frac{1}{p+1}\bigg)
=\bigg(1-\frac{1}{3+1}\bigg)\prod_{p\mid d}\frac{1}{p^2}\bigg(1-\frac{1}{p+1}\bigg)
=\frac{3}{4}\prod_{p\mid d}\frac{1}{p(p+1)}.
\]
This suggests the approximation
\[
\sum_{\substack{d\leq n\\d\text{ squarefree}\\2\nmid d,~3\nmid d}}\frac{1}{d}\cdot\frac{4n}{\pi^2}\cdot \frac{f(3d)}{d}
=\frac{3n}{\pi^2}\sum_{\substack{d\leq n\\d\text{ squarefree}\\2\nmid d,~3\nmid d}}\prod_{p\mid d}\frac{1}{p(p+1)}
=\frac{3n}{\pi^2}\prod_{p\geq5}\bigg(1+\frac{1}{p(p+1)}\bigg)
=\frac{36}{91\zeta(3)}\cdot n.
\]
The error of this approximation is
\begin{align*}
&\bigg|\sum_{\substack{\ell\in V(n)\\2\nmid\ell,~3\nmid\ell}}f(\ell)^{-1}-\frac{36}{91\zeta(3)}\cdot n\bigg|
=\bigg|\sum_{\substack{d\leq n\\d\text{ squarefree}\\2\nmid d,~3\nmid d}}\frac{1}{d}\bigg[
C\bigg(3d,2\bigg\lfloor\frac{n}{d}\bigg\rfloor\bigg)
-\frac{4n}{\pi^2}\cdot \frac{f(3d)}{d}
\bigg]\bigg|\\
&\leq\sum_{\substack{d\leq n\\d\text{ squarefree}\\2\nmid d,~3\nmid d}}\frac{1}{d}\bigg|
C\bigg(3d,2\bigg\lfloor\frac{n}{d}\bigg\rfloor\bigg)
-c\bigg(3d,2\bigg\lfloor\frac{n}{d}\bigg\rfloor\bigg)\bigg|
+\sum_{\substack{d\leq n\\d\text{ squarefree}\\2\nmid d,~3\nmid d}}\frac{1}{d}\bigg|c\bigg(3d,2\bigg\lfloor\frac{n}{d}\bigg\rfloor\bigg)-\frac{4n}{\pi^2}\cdot \frac{f(3d)}{d}
\bigg|\\
&\leq\sum_{\substack{d\leq n\\d\text{ squarefree}\\2\nmid d,~3\nmid d}}\frac{1}{d}\cdot\bigg((\sqrt{2}+1)\alpha_{1/2}\cdot \sqrt{\frac{2n}{d}}\cdot (3d)^{\beta_{1/2}}+\alpha_{0}\cdot(3d)^{\beta_{0}}\bigg)
+\sum_{\substack{d\leq n\\d\text{ squarefree}\\2\nmid d,~3\nmid d}}\frac{1}{d}\cdot\frac{4f(3d)}{\pi^2}\cdot \bigg\{\frac{n}{d}\bigg\}\\
&\leq (2+\sqrt{2})\alpha_{1/2}3^{\beta_{1/2}}\cdot\sqrt{n}\cdot\sum_{d\leq n}d^{\beta_{1/2}-3/2}
+\alpha_{0}3^{\beta_{0}}\cdot\sum_{d\leq n}d^{\beta_{0}-1}
+\frac{4}{\pi^2}\cdot\sum_{d\leq n}d^{-1}\\
&\leq (2+\sqrt{2})\alpha_{1/2}3^{\beta_{1/2}}\zeta(3/2-\beta_{1/2})\cdot\sqrt{n}
+\alpha_{0}3^{\beta_{0}}\cdot\bigg(1+\frac{n^{\beta_{0}}-1}{\beta_{0}}\bigg)
+\frac{4}{\pi^2}\cdot(1+\log n)\\
&\leq40.5553\cdot\sqrt{n}
+7.6917\cdot\bigg(1+\frac{n^{0.2019}-1}{0.2019}\bigg)
+0.4053\cdot(1+\log n).
\end{align*}
For $n\geq10^{10}$, this is $\leq (1.3\cdot10^{-3})\cdot \frac{36}{91\zeta(3)}\cdot n$.

The $3\mid\ell$ case is similar after a change of variables $\ell=3m$:
\[
\sum_{\substack{\ell\in V(n)\\2\nmid\ell,~3\mid\ell}}f(\ell)^{-1}
%=\sum_{\substack{\ell\leq n\\\ell\text{ squarefree}\\2\nmid\ell,~3\mid\ell}}\prod_{p\mid\ell}\bigg(1+\frac{1}{p}\bigg)
=\bigg(1+\frac{1}{3}\bigg)\sum_{\substack{m\leq n/3\\m\text{ squarefree}\\2\nmid m,~3\nmid m}}\prod_{p\mid m}\bigg(1+\frac{1}{p}\bigg)
=\frac{4}{3}\sum_{\substack{d\leq n/3\\d\text{ squarefree}\\2\nmid d,~3\nmid d}}\frac{1}{d}\cdot C\bigg(3d,2\bigg\lfloor\frac{n}{3d}\bigg\rfloor\bigg),
% &=\bigg(1+\frac{1}{3}\bigg)\sum_{\substack{m\leq n/3\\m\text{ squarefree}\\2\nmid m,~3\nmid m}}\sum_{d\mid m}\frac{1}{d}&&\text{($m$ is squarefree)}\\
% &=\bigg(1+\frac{1}{3}\bigg)\sum_{\substack{d\leq n/3\\d\text{ squarefree}\\2\nmid d,~3\nmid d}}\frac{1}{d}\sum_{\substack{m\leq n/3\\m\text{ squarefree}\\2\nmid m,~3\nmid m,~d\mid m}}1\quad&&\text{(interchange sums)}.
%&=\bigg(1+\frac{1}{3}\bigg)\sum_{\substack{d\leq n/3\\d\text{ squarefree}\\2\nmid d,~3\nmid d}}\sum_{\substack{q\leq n/d\\q\text{ squarefree}\\2\nmid q,~3\mid q}}\frac{1}{d}&&\text{(change of variables: $3m=dq$)}\\
%&=\bigg(1+\frac{1}{3}\bigg)\sum_{\substack{d\leq n/3\\d\text{ squarefree}\\2\nmid d,~3\nmid d}}\frac{1}{d} \text{~~\textcolor{red}{FINISH THIS}}
%\end{aligned}
\]
and the corresponding error of approximation is
\begin{align*}
&\bigg|\sum_{\substack{\ell\in V(n)\\2\nmid\ell,~3\mid\ell}}f(\ell)^{-1}-\frac{16}{91\zeta(3)}\cdot n\bigg|
=\bigg|\frac{4}{3}\sum_{\substack{d\leq n/3\\d\text{ squarefree}\\2\nmid d,~3\nmid d}}\frac{1}{d}\bigg[
C\bigg(3d,2\bigg\lfloor\frac{n}{3d}\bigg\rfloor\bigg)
-\frac{4n}{3\pi^2}\cdot \frac{f(3d)}{d}
\bigg]\bigg|\\
&\leq\frac{4}{3}\sum_{\substack{d\leq n/3\\d\text{ squarefree}\\2\nmid d,~3\nmid d}}\frac{1}{d}\bigg|
C\bigg(3d,2\bigg\lfloor\frac{n}{3d}\bigg\rfloor\bigg)
-c\bigg(3d,2\bigg\lfloor\frac{n}{3d}\bigg\rfloor\bigg)\bigg|
+\frac{4}{3}\sum_{\substack{d\leq n/3\\d\text{ squarefree}\\2\nmid d,~3\nmid d}}\frac{1}{d}\bigg|c\bigg(3d,2\bigg\lfloor\frac{n}{3d}\bigg\rfloor\bigg)-\frac{4n}{3\pi^2}\cdot \frac{f(3d)}{d}
\bigg|\\
&\leq\frac{4}{3}\sum_{\substack{d\leq n/3\\d\text{ squarefree}\\2\nmid d,~3\nmid d}}\frac{1}{d}\cdot\bigg((\sqrt{2}+1)\alpha_{1/2}\cdot \sqrt{\frac{2n}{3d}}\cdot (3d)^{\beta_{1/2}}+\alpha_{0}\cdot(3d)^{\beta_{0}}\bigg)
+\frac{4}{3}\sum_{\substack{d\leq n/3\\d\text{ squarefree}\\2\nmid d,~3\nmid d}}\frac{1}{d}\cdot\frac{4f(3d)}{\pi^2}\cdot \bigg\{\frac{n}{3d}\bigg\}\\
&\leq\frac{4(2+\sqrt{2})}{3\sqrt{3}}\alpha_{1/2}3^{\beta_{1/2}}\cdot\sqrt{n}\cdot\sum_{d\leq n/3}d^{\beta_{1/2}-3/2}
+\frac{4}{3}\alpha_{0}3^{\beta_{0}}\cdot\sum_{d\leq n/3}d^{\beta_{0}-1}
+\frac{16}{3\pi^2}\cdot\sum_{d\leq n/3}d^{-1}\\
&\leq\frac{4(2+\sqrt{2})}{3\sqrt{3}}\alpha_{1/2}3^{\beta_{1/2}}\zeta(3/2-\beta_{1/2})\cdot\sqrt{n}
+\frac{4}{3}\alpha_{0}3^{\beta_{0}}\cdot\bigg(1+\frac{(n/3)^{\beta_{0}}-1}{\beta_{0}}\bigg)
+\frac{16}{3\pi^2}\cdot\bigg(1+\log \frac{n}{3}\bigg)\\
&\leq31.2195\cdot\sqrt{n}
+10.2556\cdot\bigg(1+\frac{(n/3)^{0.2019}-1}{0.2019}\bigg)
+0.5404\cdot\bigg(1+\log \frac{n}{3}\bigg).
\end{align*}
For $n\geq10^{10}$, this is $\leq (2.2\cdot 10^{-3})\cdot \frac{16}{91\zeta(3)}\cdot n$.

\section{Discussion}
\label{sec.discussion}

In this paper, we showed that the even vertices form a maximum independent set in the squarefree graph by presenting a clique cover of the same size.
A few questions remain.

First, when is the maximum independent set unique?
Notably, it is not unique for $3\le n\le 9$ and $n=21$.
Indeed, for these values of $n$, the multiples of $3$ constitute an independent set of the same size as the multiples of $2$.
(For $n=5$, one can also use the multiples of $5$.)

Second, do the strategies described in Section~\ref{sec.combinatorial} always produce a clique cover of the squarefree graph?
Interestingly, for the greedy strategies, this would produce a single clique cover for all of $\mathbb{N}$.

Finally, considering how Theorem~\ref{thm.main} is implied by Theorem~\ref{thm.stronger}, is Proposition~\ref{prop.chvatal} similarly implied by a corresponding result involving a clique cover?
And does such a result also imply Theorem~\ref{thm.stronger} using an argument similar to Weisenberg's proof of Theorem~\ref{thm.main}?
This would be interesting since \Chvatal{}'s proof argues in terms of intersection families (i.e., independent sets, as opposed to clique covers), while our proof of Theorem~\ref{thm.stronger} makes heavy use of number-theoretic properties of the squarefree graph.
That is, neither approach seems to easily adapt to this potential result.

\section*{Acknowledgments}

DGM was partially supported by NSF DMS 2220304.
WS was partially supported by NSF DMS 2502029 and a Sloan Research Fellowship.

\end{document}